\documentclass[11pt]{article}

\usepackage[utf8]{inputenc}
\usepackage[margin=1.05in]{geometry}
\usepackage{amsmath,amssymb,amsthm}
\usepackage{mathtools}
\usepackage{array}
\usepackage{booktabs}
\newcolumntype{P}[1]{>{\raggedright\arraybackslash}p{#1}}
\usepackage{enumitem}
\usepackage{xcolor}
\usepackage{mdframed}
\usepackage[protrusion=true,expansion=false]{microtype}
\usepackage[hidelinks,hypertexnames=false]{hyperref}

\theoremstyle{plain}
\newtheorem{theorem}{Theorem}[section]
\newtheorem{lemma}[theorem]{Lemma}
\newtheorem{proposition}[theorem]{Proposition}
\newtheorem{corollary}[theorem]{Corollary}
\newtheorem{quotedthm}[theorem]{Theorem}

\theoremstyle{definition}
\newtheorem{definition}[theorem]{Definition}

\theoremstyle{remark}
\newtheorem{remark}[theorem]{Remark}

\newmdtheoremenv[
  linewidth=1pt,
  linecolor=black,
  backgroundcolor=black!4,
  innertopmargin=8pt,
  innerbottommargin=8pt,
  innerleftmargin=10pt,
  innerrightmargin=10pt,
  roundcorner=3pt
]{unproved}[theorem]{Unproved quantitative hypothesis}

\DeclareMathOperator{\Tr}{Tr}
\newcommand{\HS}{\mathrm{HS}}
\newcommand{\RR}{\mathbb{R}}
\newcommand{\BG}{\mathsf{G}}
\newcommand{\dd}{\,d}

\title{Entropy-type traces and moment expansions for the\\
sine-kernel time--band limiting operator}
\author{Ahmadreza Azimifard}
\date{August 2026}

\hypersetup{%
  pdftitle={Entropy-type traces and moment expansions for the sine-kernel
time-band limiting operator},
  pdfauthor={Ahmadreza Azimifard},
  pdfsubject={Spectral theory of the sine-kernel time-band limiting
operator; entropy-type traces and moment expansions},
  pdfkeywords={time-band limiting operator, sine kernel, prolate
spheroidal wave functions, Landau-Widom asymptotics, entropy trace,
moment expansion, eigenvalue counting function}}

\begin{document}
\maketitle

\begin{abstract}
We study the sine-kernel time--band limiting operator $S_c$ on
$L^2(0,c)$ through the entropy-type trace
$\Tr\varphi_u(S_c)$, where
$\varphi_u(x)=\log(1+(e^u-1)x)-ux$.  A Fourier factorization gives
$S_c=A_c^*A_c$ and $\Tr S_c=c$.  We derive an exact identity for
$\Tr(S_c-S_c^2)$ and the lower bound
$\Tr(S_c-S_c^2)\ge\pi^{-2}\log c-\pi^{-3}$, which in turn yields
$\Tr\varphi_u(S_c)\ge u\log c/(4\pi^2)$.  We also obtain a positive
moment expansion
$\Tr\varphi_u(S_c)=\sum_{n\ge1}A_n(u)T_n(c)$ and, for each fixed $n$,
the Landau--Widom asymptotic
$T_n(c)=\log c/(\pi^2n)+o_n(\log c)$.

Three natural uniformity hypotheses, denoted (LC), (MT), and (QM), lead
to a quadratic lower bound of order $u^2\log c$ on logarithmically
growing windows.  We prove the implications among these hypotheses and
the resulting bounds, with explicit constants, and explain why the
fixed-index asymptotic alone does not provide the required uniformity.
The hypotheses themselves remain open.  Finally, a signed
growing-parameter sine-kernel determinant theorem from a companion
paper gives the quadratic lower bound independently of (LC), (MT), and
(QM).  This separates the conclusions available from elementary
operator methods from those that use Riemann--Hilbert asymptotics.

\medskip
\noindent
\textbf{2020 Mathematics Subject Classification.}
Primary 47B35; Secondary 42C05, 45C05, 47B10, 94A11.

\medskip
\noindent
\textbf{Key words and phrases.}
Time--band limiting operator; sine kernel; prolate spheroidal wave
functions; Landau--Widom asymptotics; entropy-type trace; moment
expansion; eigenvalue counting function.
\end{abstract}

\section{Setting and main question}
\label{sec:setting}

\begin{definition}[Sine kernel and the operator $S_c$]
\label{def:kernel}
Let
\begin{equation}
  K(s)=\frac{\sin(\pi s)}{\pi s},\qquad K(0)=1 ,
  \label{eq:kernel}
\end{equation}
which is real analytic and even on $\RR$, with $|K|\le1$. For $c>0$ let $S_c$
be the integral operator on $L^2(0,c)$ with kernel
\begin{equation}
  S_c(x,y)=K(x-y),\qquad 0<x,y<c .
  \label{eq:Sc}
\end{equation}
\end{definition}

The kernel is bounded on a set of finite measure, hence square integrable on
$(0,c)^2$; therefore $S_c$ is a Hilbert--Schmidt, in particular compact,
operator. It is symmetric and real, hence self-adjoint. We write
$\lambda_1(c)\ge\lambda_2(c)\ge\cdots$ for its eigenvalues, repeated according
to multiplicity; Theorem~\ref{thm:factorization} shows that all of them lie in
$[0,1]$, and Remark~\ref{rem:strict} shows that in fact
$0<\lambda_j(c)<1$ for every $j$.

\begin{definition}[The entropy-type function $\varphi_u$]
\label{def:phi}
For $u\ge0$ and $x\in[0,1]$ set
\begin{equation}
  \varphi_u(x)=\log\bigl(1+(e^u-1)x\bigr)-ux .
  \label{eq:phi}
\end{equation}
For each fixed $u$ the function $\varphi_u$ is continuous on $[0,1]$ with
$\varphi_u(0)=\varphi_u(1)=0$, and $\varphi_0\equiv0$.
\end{definition}

\begin{definition}[Moments and their weights]
\label{def:moments}
For integers $n\ge1$ and $u\ge0$ put
\begin{equation}
  T_n(c)=\Tr\bigl(S_c(I-S_c)^n\bigr),
  \qquad
  A_n(u)=\int_0^u\bigl(1-e^{-t}\bigr)^n\dd t .
  \label{eq:TnAn}
\end{equation}
\end{definition}

Since $0\le 1-e^{-t}<1$, each $A_n(u)$ is finite, nonnegative, and
nonincreasing in $n$; the trace defining $T_n(c)$ is finite and nonnegative by
Proposition~\ref{prop:Tnbounds}.

\paragraph{Main question.}
The estimates below are organized around the inequality
\begin{equation}
  \Tr\varphi_u(S_c)\ \ge\ \kappa_T\,u^2\log c,
  \qquad L_T\le u\le\alpha\log c ,
  \tag{$\mathrm{T}_\alpha$}
  \label{eq:target}
\end{equation}
for suitable positive constants $\kappa_T$, $L_T$, and $\alpha$, and
all sufficiently large $c$.  We consider two complementary approaches.

\begin{enumerate}[label=\textup{(\Roman*)},leftmargin=2.6em]
\item \emph{Operator and moment methods.}  Sections
\ref{sec:factorization}--\ref{sec:qm} develop, from first principles, the
factorization, variance, logistic-window, and moment-expansion machinery
for $S_c$.  This machinery yields unconditionally the \emph{linear}
bound \eqref{eq:E3}, and it yields the quadratic target
\eqref{eq:target} \emph{conditionally} on any one of three hypotheses
(LC), (MT), (QM), by the proved implications of
Propositions~\ref{prop:R1}, \ref{prop:A4} and Theorem~\ref{thm:P5}.
The hypotheses themselves remain open; Section~\ref{sec:qm} explains
why the fixed-index Landau--Widom asymptotic does not supply the needed
uniformity.
\item \emph{The determinant method.}  Section~\ref{sec:determinant}
uses a signed growing-parameter sine-kernel determinant estimate from
the companion paper \cite{CompanionC} and derives \eqref{eq:target}
with $L_T=2$,
$\kappa_T=1/(4\pi^2)$ and every $\alpha>0$.  This argument uses none of
(LC), (MT), or (QM); its nonlinear steepest-descent input is proved in
\cite{CompanionC}.
\end{enumerate}

The first approach gives structural information that is not contained
in the determinant asymptotic, including the variance identity
\eqref{eq:A2}, the positive expansion \eqref{eq:P1}, and the counting
identity of Lemma~\ref{lem:count}.  The two approaches can therefore be
read independently.  Their precise assumptions are summarized in
Section~\ref{sec:cert}.

\begin{remark}[Scope]\label{rem:noclaim}
The counting results considered here are conditional consequences of
(LC) or (MT).  The related bridge problem asks for a lower bound of order
$L\log(\alpha c/L)$, $L=\log\frac1\delta$, for
$\#\{j:\delta<\lambda_j(c)\le\tfrac12\}$ uniformly on a moving threshold
range and is treated in \cite{CompanionC}.
\end{remark}

\paragraph{Fourier normalization.}
Throughout, the Fourier transform on $L^2(\RR)$ is
\begin{equation}
  (\mathcal{F}h)(\xi)=\widehat h(\xi)
  =\frac1{\sqrt{2\pi}}\int_{\RR}e^{-ix\xi}h(x)\dd x ,
  \label{eq:ft}
\end{equation}
which is unitary on $L^2(\RR)$ (Plancherel), with inverse
$(\mathcal{F}^{-1}g)(x)=(2\pi)^{-1/2}\int_{\RR}e^{ix\xi}g(\xi)\dd\xi$. Inner
products are linear in the first and conjugate linear in the second
argument.

\section{Fourier factorization, contraction, and the trace}
\label{sec:factorization}

\begin{definition}
\label{def:Ac}
Let $A_c\colon L^2(0,c)\to L^2(-\pi,\pi)$ be defined by
\begin{equation}
  (A_cf)(\xi)=\frac1{\sqrt{2\pi}}\int_0^ce^{-ix\xi}f(x)\dd x,
  \qquad-\pi<\xi<\pi .
  \label{eq:Ac}
\end{equation}
\end{definition}

\begin{theorem}[Factorization, contraction, trace]
\label{thm:factorization}
The operator $A_c$ is a Hilbert--Schmidt contraction, and
\begin{equation}
  S_c=A_c^*A_c .
  \label{eq:factor}
\end{equation}
Consequently $0\le S_c\le I$, so $\sigma(S_c)\subset[0,1]$; moreover
$\|A_c\|_{\HS}^2=c$, the operator $S_c$ is trace class, and
\begin{equation}
  \Tr S_c=c .
  \tag{A1}
  \label{eq:A1}
\end{equation}
\end{theorem}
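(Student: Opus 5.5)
The plan is to derive everything from one explicit computation of the kernel of $A_c^*A_c$, and then to read off contraction and the trace from Plancherel and the Hilbert--Schmidt norm.

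\emph{Hilbert--Schmidt property and norm.} The operator $A_c$ is an integral operator from $L^2(0,c)$ to $L^2(-\pi,\pi)$ with kernel
\[
a(\xi,x)=(2\pi)^{-1/2}e^{-ix\xi},\qquad (\xi,x)\in(-\pi,\pi)\times(0,c).
\]
This kernel satisfies $|a|^2=1/(2\pi)$. Hence
\[
\|A_c\|_{\HS}^2=\int_{-\pi}^{\pi}\int_0^c\frac{1}{2\pi}\dd x\dd\xi=c,
\]
which shows that $A_c$ is Hilbert--Schmidt with $\|A_c\|_{\HS}^2=c$.

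\emph{Contraction.} Extend $f\in L^2(0,c)$ by zero to a function $\tilde f\in L^2(\RR)$. By \eqref{eq:ft} and \eqref{eq:Ac}, $A_cf$ is the restriction of $\widehat{\tilde f}$ to $(-\pi,\pi)$. Plancherel then gives
\[
\|A_cf\|^2\le\|\widehat{\tilde f}\|_{L^2(\RR)}^2=\|\tilde f\|^2=\|f\|^2 .
\]

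\emph{Factorization.} The adjoint has kernel $\overline{a(\xi,x)}$, so
\[
(A_c^*g)(x)=(2\pi)^{-1/2}\int_{-\pi}^{\pi}e^{ix\xi}g(\xi)\dd\xi .
\]
Composing, and using Fubini (legitimate because all kernels are bounded on sets of finite measure), the kernel of $A_c^*A_c$ is
\[
\frac1{2\pi}\int_{-\pi}^{\pi}e^{i(x-y)\xi}\dd\xi=\frac{\sin(\pi(x-y))}{\pi(x-y)}=K(x-y),
\]
with the value $1$ on the diagonal. This is exactly \eqref{eq:Sc}, so \eqref{eq:factor} holds. The only care needed is to check that the $2\pi$ normalizations give $K$ and not a rescaled kernel, which is the computation just displayed.

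\emph{Consequences.} From \eqref{eq:factor} we get
\[
\langle S_cf,f\rangle=\|A_cf\|^2\in\bigl[0,\|f\|^2\bigr],
\]
so $0\le S_c\le I$ and hence $\sigma(S_c)\subset[0,1]$. Since $S_c=A_c^*A_c$ is a product of two Hilbert--Schmidt operators, it is trace class. Moreover
\[
\Tr S_c=\Tr(A_c^*A_c)=\|A_c\|_{\HS}^2=c,
\]
which is \eqref{eq:A1}.

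\emph{Main obstacle.} The only real subtlety is the identity $\Tr(A_c^*A_c)=\|A_c\|_{\HS}^2$. The naive alternative, integrating the diagonal $K(0)=1$ of a continuous kernel, would require Mercer's theorem. I will avoid Mercer and instead use the standard identity $\Tr(B^*B)=\sum_j\|Be_j\|^2=\|B\|_{\HS}^2$ for an orthonormal basis $(e_j)$, together with the fact that the Hilbert--Schmidt norm of an integral operator equals the $L^2$ norm of its kernel.
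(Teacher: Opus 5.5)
Your proposal is correct and follows essentially the same route as the paper. You compute the adjoint's kernel and use Fubini to identify $A_c^*A_c$ with $S_c$, get the contraction from Plancherel via zero extension and restriction, obtain $\|A_c\|_{\HS}^2=c$ from the $L^2$ norm of the kernel, and conclude with $\Tr(B^*B)=\|B\|_{\HS}^2$, leaving Mercer's theorem aside exactly as the paper does, where it appears only as a consistency-check remark.
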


\begin{proof}
\emph{Step 1: the adjoint.}
Let $f\in L^2(0,c)$ and $g\in L^2(-\pi,\pi)$. Both intervals have finite
measure, so by Cauchy--Schwarz $f\in L^1(0,c)$ and $g\in L^1(-\pi,\pi)$, and
the function $(x,\xi)\mapsto e^{-ix\xi}f(x)\overline{g(\xi)}$ is
absolutely integrable on the rectangle $(0,c)\times(-\pi,\pi)$, with
$\int\!\!\int|f(x)|\,|g(\xi)|\dd x\dd\xi
=\|f\|_{L^1}\|g\|_{L^1}<\infty$.
Fubini's theorem therefore applies and gives
\[
  \langle A_cf,g\rangle
  =\int_{-\pi}^{\pi}\!\Bigl(\frac1{\sqrt{2\pi}}\int_0^ce^{-ix\xi}f(x)\dd x\Bigr)
   \overline{g(\xi)}\dd\xi
  =\int_0^cf(x)\,
   \overline{\Bigl(\frac1{\sqrt{2\pi}}\int_{-\pi}^{\pi}e^{ix\xi}g(\xi)\dd\xi\Bigr)}
   \dd x ,
\]
where we used $\overline{e^{ix\xi}g(\xi)}=e^{-ix\xi}\overline{g(\xi)}$. Hence
\begin{equation}
  (A_c^*g)(x)=\frac1{\sqrt{2\pi}}\int_{-\pi}^{\pi}e^{ix\xi}g(\xi)\dd\xi,
  \qquad 0<x<c .
  \label{eq:Acstar}
\end{equation}

\emph{Step 2: the composition.}
Fix $f\in L^2(0,c)$ and $x\in(0,c)$. Applying \eqref{eq:Acstar} to $g=A_cf$,
\[
  (A_c^*A_cf)(x)
  =\frac1{2\pi}\int_{-\pi}^{\pi}e^{ix\xi}
    \Bigl(\int_0^ce^{-iy\xi}f(y)\dd y\Bigr)\dd\xi .
\]
The integrand is bounded in modulus by $|f(y)|$, which is integrable on the
rectangle $(0,c)\times(-\pi,\pi)$ of finite measure; Fubini's theorem permits
exchanging the order of integration, giving
\[
  (A_c^*A_cf)(x)
  =\int_0^c\Bigl(\frac1{2\pi}\int_{-\pi}^{\pi}e^{i(x-y)\xi}\dd\xi\Bigr)f(y)\dd y .
\]
For $s\ne0$ the inner integral equals
$\frac{1}{2\pi}\cdot\frac{e^{i\pi s}-e^{-i\pi s}}{is}
=\frac{\sin(\pi s)}{\pi s}=K(s)$, and for $s=0$ it equals
$\frac{1}{2\pi}\cdot2\pi=1=K(0)$. Thus the inner integral is exactly $K(x-y)$
for all $s=x-y\in\RR$, and $(A_c^*A_cf)(x)=\int_0^cK(x-y)f(y)\dd y=(S_cf)(x)$.
This proves \eqref{eq:factor}.

\emph{Step 3: contraction.}
Let $E\colon L^2(0,c)\to L^2(\RR)$ be extension by zero and
$R\colon L^2(\RR)\to L^2(-\pi,\pi)$ restriction. Then $E$ is an isometry and
$R$ is a contraction, and \eqref{eq:Ac} says precisely that
$A_c=R\,\mathcal{F}E$ with $\mathcal{F}$ as in \eqref{eq:ft}: indeed, for
$f\in L^2(0,c)$ the zero extension $Ef$ satisfies
$\widehat{Ef}(\xi)=(2\pi)^{-1/2}\int_0^ce^{-ix\xi}f(x)\dd x$. Since
$\mathcal{F}$ is unitary,
\[
  \|A_cf\|_{L^2(-\pi,\pi)}\le\|\mathcal{F}Ef\|_{L^2(\RR)}
  =\|Ef\|_{L^2(\RR)}=\|f\|_{L^2(0,c)} ,
\]
so $\|A_c\|\le1$. Consequently, for every $f$,
\[
  0\le\langle S_cf,f\rangle=\|A_cf\|^2\le\|f\|^2 ,
\]
i.e.\ $0\le S_c\le I$; since $S_c$ is self-adjoint and compact, its spectrum,
and in particular every eigenvalue, lies in $[0,1]$.

\emph{Step 4: Hilbert--Schmidt norm and the trace.}
$A_c$ is an integral operator with kernel
$a(\xi,x)=(2\pi)^{-1/2}e^{-ix\xi}$ on $(-\pi,\pi)\times(0,c)$, so
\begin{equation}
  \|A_c\|_{\HS}^2
  =\int_{-\pi}^{\pi}\!\int_0^c|a(\xi,x)|^2\dd x\dd\xi
  =\int_{-\pi}^{\pi}\!\int_0^c\frac{\dd x\dd\xi}{2\pi}
  =\frac{2\pi\cdot c}{2\pi}=c<\infty .
  \label{eq:HS}
\end{equation}
Thus $A_c$ is Hilbert--Schmidt. A product of two Hilbert--Schmidt operators is
trace class, so $S_c=A_c^*A_c$ is trace class; and for any Hilbert--Schmidt
operator $B$ one has $\Tr(B^*B)=\|B\|_{\HS}^2$. With $B=A_c$ and
\eqref{eq:HS} this yields $\Tr S_c=c$, which is \eqref{eq:A1}.
\end{proof}

\begin{remark}[Consistency check]
\label{rem:diagonal}
Since $S_c$ is a positive trace-class integral operator with continuous kernel,
Mercer's theorem gives $\Tr S_c=\int_0^cS_c(x,x)\dd x=\int_0^cK(0)\dd x=c$,
in agreement with \eqref{eq:A1}.
\end{remark}

\begin{remark}[Strictness of the spectral inclusion]
\label{rem:strict}
No eigenvalue equals $0$ or $1$. If $S_cf=0$ then $\|A_cf\|=0$, so
$\widehat{Ef}$ vanishes on $(-\pi,\pi)$; but $Ef$ has compact support, so
$\widehat{Ef}$ extends to an entire function of exponential type, and a
nontrivial such function cannot vanish on an interval. Hence $f=0$. If
$S_cf=f$ with $\|f\|=1$, then $\|A_cf\|=\|Ef\|$, so by Plancherel
$\widehat{Ef}$ vanishes outside $(-\pi,\pi)$; then $Ef$ and $\widehat{Ef}$ both
have compact support, which forces $Ef=0$ by the same analyticity argument
applied to $\widehat{Ef}$. Hence $0<\lambda_j(c)<1$ for all $j$. This remark is
not used anywhere below --- in particular the finiteness of the counting
function of Section~\ref{sec:logistic} follows already from trace-class
membership --- and is recorded only because it shows that the logistic
parametrization $\lambda=q(t)$, $t\in\RR$, of Section~\ref{sec:logistic}
reaches every eigenvalue of $S_c$.
\end{remark}

\section{Elementary bounds for \texorpdfstring{$\varphi_u$}{phi\_u}}
\label{sec:elementary}

\begin{lemma}[Exact derivative and the quadratic minorant]
\label{lem:E1}
For $x\in[0,1]$ and $u\ge0$,
\begin{equation}
  \partial_u\varphi_u(x)
  =\frac{(e^u-1)x(1-x)}{1+(e^u-1)x}
  =x(1-x)\,\frac{1-e^{-u}}{e^{-u}+(1-e^{-u})x} .
  \label{eq:dphi}
\end{equation}
Consequently
\begin{equation}
  \partial_u\varphi_u(x)\ \ge\ (1-e^{-u})\,x(1-x),
  \label{eq:dphilower}
\end{equation}
and, integrating,
\begin{equation}
  \varphi_u(x)\ \ge\ \bigl(u-1+e^{-u}\bigr)\,x(1-x) .
  \tag{E1}
  \label{eq:E1}
\end{equation}
In particular $\varphi_u\ge0$ on $[0,1]$ for every $u\ge0$.
\end{lemma}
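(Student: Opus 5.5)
The plan is to differentiate directly, bound the resulting denominator, and integrate in $u$.

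\emph{Step 1: the derivative.} For fixed $x\in[0,1]$ the argument $1+(e^u-1)x$ is at least $1$ when $u\ge0$. So $\varphi_u(x)$ is smooth in $u$, and
\[
\partial_u\varphi_u(x)=\frac{e^ux}{1+(e^u-1)x}-x .
\]
Over the common denominator the numerator is $e^ux-x-(e^u-1)x^2=(e^u-1)x(1-x)$, which gives the first form in \eqref{eq:dphi}. Multiplying numerator and denominator by $e^{-u}$ gives the second form, since $e^{-u}(1+(e^u-1)x)=e^{-u}+(1-e^{-u})x$.

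\emph{Step 2: the pointwise lower bound \eqref{eq:dphilower}.} The denominator $D=e^{-u}+(1-e^{-u})x$ is a convex combination of $1$ and $x$, with weights $e^{-u}$ and $1-e^{-u}$. Since $x\le1$, it satisfies $0<D\le1$. The factors $x(1-x)$ and $1-e^{-u}$ are nonnegative. Hence
\[
x(1-x)\,\frac{1-e^{-u}}{D}\ \ge\ x(1-x)(1-e^{-u}),
\]
which is \eqref{eq:dphilower}. One could worry about dividing by a vanishing denominator, but this cannot happen: $D\ge e^{-u}>0$.

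\emph{Step 3: integration.} We have $\varphi_0\equiv0$, and $u\mapsto\varphi_u(x)$ is $C^1$ on $[0,\infty)$. By the fundamental theorem of calculus,
\[
\varphi_u(x)=\int_0^u\partial_t\varphi_t(x)\,dt\ \ge\ x(1-x)\int_0^u(1-e^{-t})\,dt=(u-1+e^{-u})\,x(1-x),
\]
which is \eqref{eq:E1}. Nonnegativity of $\varphi_u$ then follows from $x(1-x)\ge0$ on $[0,1]$ together with $u-1+e^{-u}\ge0$; the latter holds because the function vanishes at $u=0$ and has derivative $1-e^{-u}\ge0$.

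There is no real obstacle; the whole argument is elementary. The only step needing care is the sign and size of the denominator in Step 2, and recognising it as a convex combination of $1$ and $x$ settles both at once.
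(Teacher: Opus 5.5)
Your proposal is correct and follows essentially the same route as the paper: direct differentiation, rewriting with the factor $e^{-u}$, viewing the denominator as a convex combination of $1$ and $x$ to get $0<D\le1$, and integrating via the fundamental theorem of calculus from $\varphi_0\equiv0$. Your explicit note that $D\ge e^{-u}>0$ matches the paper's treatment of the denominator.
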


\begin{proof}
Fix $x\in[0,1]$. The map $u\mapsto1+(e^u-1)x$ is smooth and satisfies
$1+(e^u-1)x\ge1>0$ for $x\in[0,1]$, $u\ge0$, because it is the convex
combination $(1-x)\cdot1+x\cdot e^u$ of $1$ and $e^u\ge1$. Differentiating
\eqref{eq:phi} in $u$,
\[
  \partial_u\varphi_u(x)
  =\frac{e^ux}{1+(e^u-1)x}-x
  =\frac{e^ux-x\bigl(1+(e^u-1)x\bigr)}{1+(e^u-1)x} .
\]
The numerator simplifies to
$x\bigl(e^u-1-(e^u-1)x\bigr)=(e^u-1)x(1-x)$, so
\[
  \partial_u\varphi_u(x)=\frac{(e^u-1)x(1-x)}{1+(e^u-1)x} .
\]
Multiplying numerator and denominator by $e^{-u}>0$ and using
$e^{-u}(e^u-1)=1-e^{-u}$ and
$e^{-u}\bigl(1+(e^u-1)x\bigr)=e^{-u}+(1-e^{-u})x$ gives the second expression
in \eqref{eq:dphi}.

For $x\in[0,1]$ the denominator of that second expression is a convex
combination of $1$ and $x$ with weights $e^{-u}$ and $1-e^{-u}$, hence
\[
  0<e^{-u}+(1-e^{-u})x\le e^{-u}\cdot 1+(1-e^{-u})\cdot 1=1 .
\]
Since the numerator $x(1-x)(1-e^{-u})$ is nonnegative on $[0,1]$, dividing by a
quantity that is at most $1$ can only increase it; this is
\eqref{eq:dphilower}.

Finally $\varphi_0(x)=\log1-0=0$, and $u\mapsto\varphi_u(x)$ is continuously
differentiable on $[0,\infty)$, so by the fundamental theorem of calculus and
\eqref{eq:dphilower},
\[
  \varphi_u(x)=\int_0^u\partial_t\varphi_t(x)\dd t
  \ \ge\ x(1-x)\int_0^u\bigl(1-e^{-t}\bigr)\dd t
  =\bigl(u-1+e^{-u}\bigr)x(1-x) .
\]
The factor $u-1+e^{-u}$ is nonnegative for $u\ge0$ (it vanishes at $u=0$ and
has derivative $1-e^{-u}\ge0$), and $x(1-x)\ge0$ on $[0,1]$; hence
$\varphi_u\ge0$ there.
\end{proof}

\begin{lemma}[Linear majorant and trace finiteness]
\label{lem:upper}
For $x\in[0,1]$ and $u\ge0$,
\begin{equation}
  0\ \le\ \varphi_u(x)\ \le\ \bigl(e^u-1-u\bigr)x .
  \label{eq:upper}
\end{equation}
Consequently $\varphi_u(S_c)$ is a positive trace-class operator with
\begin{equation}
  0\ \le\ \Tr\varphi_u(S_c)\ \le\ \bigl(e^u-1-u\bigr)c<\infty .
  \label{eq:tracefinite}
\end{equation}
\end{lemma}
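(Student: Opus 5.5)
The plan is to obtain the pointwise bound by integrating in $u$, as in the proof of Lemma~\ref{lem:E1}, and then pass to traces by the spectral theorem.

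\emph{Pointwise bound.} The lower bound $\varphi_u\ge0$ is already contained in Lemma~\ref{lem:E1}. For the upper bound, take the first expression in \eqref{eq:dphi}. For $x\in[0,1]$ and $t\ge0$ the denominator $1+(e^t-1)x$ is at least $1$, and $1-x\le1$. Hence
\[
  0\le\partial_t\varphi_t(x)\le (e^t-1)x .
\]
Since $\varphi_0(x)=0$, integrating from $0$ to $u$ gives
\[
  \varphi_u(x)\le x\int_0^u(e^t-1)\dd t=(e^u-1-u)x ,
\]
which is \eqref{eq:upper}.

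\emph{Passage to the operator.} By Theorem~\ref{thm:factorization}, $S_c$ is a positive trace-class operator with spectrum in $[0,1]$. It is also compact and self-adjoint, so it has an orthonormal eigenbasis. One fine point is that $L^2(0,c)$ may contain a kernel of $S_c$. Remark~\ref{rem:strict} rules this out, but the paper says that remark is not used, so I will not rely on it. In any case $\varphi_u(0)=0$, so the continuous functional calculus gives $\varphi_u(S_c)$ acting diagonally: by $\varphi_u(\lambda_j)$ on the eigenvectors, and by $0$ on $\ker S_c$.

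By \eqref{eq:upper}, $0\le\varphi_u(S_c)\le(e^u-1-u)S_c$ in the operator order. Therefore $\varphi_u(S_c)$ is positive, and its eigenvalues are summable:
\[
  \sum_j\varphi_u(\lambda_j)\le(e^u-1-u)\sum_j\lambda_j=(e^u-1-u)c ,
\]
using \eqref{eq:A1}. This shows that $\varphi_u(S_c)$ is trace class and gives \eqref{eq:tracefinite}.

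There is no real obstacle here. The only point needing care is the functional-calculus bookkeeping on $\ker S_c$, and the identity $\varphi_u(0)=0$ disposes of it.
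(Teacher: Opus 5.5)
Your proof is correct and follows the paper's argument: a pointwise bound, then the operator inequality $0\le\varphi_u(S_c)\le(e^u-1-u)S_c$ from the functional calculus, then $\Tr S_c=c$. The one difference is the pointwise upper bound: the paper gets it in one line from $\log(1+y)\le y$ with $y=(e^u-1)x$, while you integrate the derivative formula \eqref{eq:dphi}. Both work, and if you kept the factor $1-x$ instead of discarding it, your route would give the sharper bound $(e^u-1-u)x(1-x)$.
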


\begin{proof}
The left inequality is the last assertion of Lemma~\ref{lem:E1}. For the right
one, apply $\log(1+y)\le y$, valid for $y>-1$, with $y=(e^u-1)x\ge0$:
\[
  \varphi_u(x)=\log\bigl(1+(e^u-1)x\bigr)-ux\le(e^u-1)x-ux=(e^u-1-u)x .
\]
Both bounds hold on the spectrum $\sigma(S_c)\subset[0,1]$, so by the
functional calculus for the bounded self-adjoint operator $S_c$,
\[
  0\le\varphi_u(S_c)\le(e^u-1-u)S_c
\]
in the operator order. A positive operator dominated by a trace-class operator
is trace class with a dominated trace; since $\Tr S_c=c$ by \eqref{eq:A1},
\eqref{eq:tracefinite} follows.
\end{proof}

\section{The variance identity and an explicit logarithmic
 lower bound}
\label{sec:variance}

\begin{definition}
\label{def:V}
Put $V_c=\Tr\bigl(S_c-S_c^2\bigr)$.
\end{definition}

Both $S_c$ and $S_c^2$ are trace class ($S_c$ by Theorem~\ref{thm:factorization},
$S_c^2$ as a product of trace-class operators), so $V_c$ is a well-defined real
number; and $0\le S_c\le I$ gives $S_c^2\le S_c$, so $V_c\ge0$.

\begin{lemma}[Normalization of the kernel]
\label{lem:plancherel}
$\displaystyle\int_{\RR}K(s)^2\dd s=1$.
\end{lemma}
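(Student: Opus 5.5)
We will obtain the identity from Plancherel's theorem, using the same Fourier normalization \eqref{eq:ft} as in the proof of Theorem~\ref{thm:factorization}. The idea is to write $K$ as the inverse Fourier transform of a normalized indicator function and then read off its $L^2$ norm.

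\emph{Step 1: identify the Fourier side.} Let
\[
  g=(2\pi)^{-1/2}\,\mathbf 1_{(-\pi,\pi)} .
\]
Then $\|g\|_{L^2(\RR)}^2=(2\pi)^{-1}\cdot 2\pi=1$. Step~2 of the proof of Theorem~\ref{thm:factorization} computed
\[
  \frac1{2\pi}\int_{-\pi}^{\pi}e^{is\xi}\dd\xi=K(s)
\]
for every $s\in\RR$, including $s=0$. This says exactly that $(\mathcal F^{-1}g)(s)=K(s)$. Since $g\in L^1\cap L^2$, the integral formula for $\mathcal F^{-1}$ agrees with the $L^2$ extension, so no limiting argument is needed.

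\emph{Step 2: apply unitarity.} Because $\mathcal F^{-1}$ is unitary on $L^2(\RR)$,
\[
  \int_\RR K(s)^2\dd s=\|\mathcal F^{-1}g\|_{L^2}^2=\|g\|_{L^2}^2=1 .
\]
Since $K$ is real, $|K|^2=K^2$.

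\emph{Main obstacle.} The only delicate point is that $K\notin L^1(\RR)$, so one must be sure the identity $\mathcal F^{-1}g=K$ holds in $L^2$ and not merely pointwise. This is settled by starting on the $L^1\cap L^2$ side with $g$ rather than with $K$.

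A direct check is also available: $\int_\RR(\sin\pi s/\pi s)^2\dd s=\frac1\pi\int_\RR\frac{\sin^2 t}{t^2}\dd t=\frac1\pi\cdot\pi=1$. This can be recorded as a consistency remark.
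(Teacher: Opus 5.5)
Your proof is correct and is essentially the paper's argument: the paper takes the unnormalized indicator $h=\chi_{(-\pi,\pi)}$, gets $\mathcal{F}^{-1}h=\sqrt{2\pi}\,K$ from the computation in Step~2 of Theorem~\ref{thm:factorization}, and applies unitarity, so it differs from yours only by the factor $(2\pi)^{-1/2}$. Your observation that $g\in L^1\cap L^2$, so that the integral formula really represents the $L^2$ inverse transform, is a detail the paper leaves implicit.
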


\begin{proof}
Let $h=\chi_{(-\pi,\pi)}\in L^2(\RR)$. By the computation in Step~2 of the
proof of Theorem~\ref{thm:factorization},
\[
  (\mathcal{F}^{-1}h)(s)=\frac1{\sqrt{2\pi}}\int_{-\pi}^{\pi}e^{is\xi}\dd\xi
  =\sqrt{2\pi}\cdot\frac1{2\pi}\int_{-\pi}^{\pi}e^{is\xi}\dd\xi
  =\sqrt{2\pi}\,K(s)
\]
for all $s\in\RR$, including $s=0$. Since $\mathcal{F}^{-1}$ is unitary on
$L^2(\RR)$,
\[
  2\pi\int_{\RR}K(s)^2\dd s=\|\mathcal{F}^{-1}h\|_{L^2}^2=\|h\|_{L^2}^2=2\pi ,
\]
whence the claim. (In particular $K\in L^2(\RR)$.)
\end{proof}

\begin{theorem}[Variance identity]
\label{thm:A2}
For every $c>0$,
\begin{equation}
  V_c
  =\int_0^c\!\!\int_{\RR\setminus(0,c)}K(x-y)^2\dd y\dd x
  =2\int_0^\infty\min\{c,s\}\,K(s)^2\dd s ,
  \label{eq:varid}
\end{equation}
and therefore
\begin{equation}
  V_c=\frac2{\pi^2}\int_0^c\frac{\sin^2(\pi s)}{s}\dd s
     +\frac{2c}{\pi^2}\int_c^\infty\frac{\sin^2(\pi s)}{s^2}\dd s .
  \tag{A2}
  \label{eq:A2}
\end{equation}
\end{theorem}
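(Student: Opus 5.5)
We compute the two traces separately and subtract. By \eqref{eq:A1} (or Remark~\ref{rem:diagonal}), $\Tr S_c=c=\int_0^c\int_{\RR}K(x-y)^2\dd y\dd x$, where the second equality uses Lemma~\ref{lem:plancherel} after the substitution $s=x-y$ for each fixed $x$. For $\Tr S_c^2$, since $S_c$ is self-adjoint and Hilbert--Schmidt with real symmetric kernel, $\Tr S_c^2=\|S_c\|_{\HS}^2=\int_0^c\int_0^cK(x-y)^2\dd y\dd x$. Subtracting, the inner integral over $\RR$ minus the inner integral over $(0,c)$ leaves exactly the integral over $\RR\setminus(0,c)$. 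All integrands are nonnegative and integrable, so Tonelli justifies every splitting. This gives the first equality in \eqref{eq:varid}.

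For the second equality, we change variables $s=x-y$ for fixed $x\in(0,c)$. The condition $y\notin(0,c)$ becomes $s\ge x$ or $s\le x-c$. Using evenness of $K$, the contribution from $s\le x-c$ equals $\int_{c-x}^\infty K(s)^2\dd s$. Replacing $x$ by $c-x$ in that term turns both contributions into $\int_0^c\int_x^\infty K(s)^2\dd s\dd x$, so
\[
V_c=2\int_0^c\!\int_x^\infty K(s)^2\dd s\dd x=2\int_0^\infty K(s)^2\,\bigl|\{x\in(0,c):x<s\}\bigr|\dd s ,
\]
by Tonelli. The measure of $\{x\in(0,c):x<s\}$ is $\min\{c,s\}$, which yields the second expression in \eqref{eq:varid}.

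Finally, \eqref{eq:A2} follows by splitting $\int_0^\infty$ at $s=c$ and inserting $K(s)^2=\sin^2(\pi s)/(\pi^2s^2)$. On $(0,c)$ the factor $\min\{c,s\}=s$ cancels one power of $s$. On $(c,\infty)$ the factor is $c$. Both integrals converge: the first because $\sin^2(\pi s)/s\to0$ as $s\to0$, the second since the integrand is $O(s^{-2})$.

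The only point needing care is the identity $\Tr S_c^2=\iint K^2$. It follows from the fact that $S_c^2=S_c^*S_c$, combined with the standard identity $\Tr(B^*B)=\|B\|_{\HS}^2$ already used in Step~4 of Theorem~\ref{thm:factorization}. Everything else is Tonelli bookkeeping, so I do not expect a real obstacle.
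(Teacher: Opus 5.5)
Your proposal is correct and follows essentially the same route as the paper. You compute $\Tr S_c^2=\|S_c\|_{\HS}^2$, subtract it from $\Tr S_c=c$ written via Lemma~\ref{lem:plancherel}, then use a change of variables, evenness of $K$ and the reflection $x\mapsto c-x$ to reach $2\int_0^c\int_x^\infty K(s)^2\dd s\dd x$, and finish with Tonelli and a split at $s=c$. The only difference is that you substitute $s=x-y$ where the paper uses $s=y-x$, which is immaterial.
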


\begin{proof}
\emph{Step 1: the trace of $S_c^2$.}
$S_c$ is Hilbert--Schmidt with kernel $K(x-y)$ on $(0,c)^2$, and for a
self-adjoint Hilbert--Schmidt operator $B$ with kernel $b$ one has
$\Tr(B^2)=\|B\|_{\HS}^2=\int\!\!\int|b|^2$. Hence
\[
  \Tr S_c^2=\int_0^c\!\!\int_0^cK(x-y)^2\dd y\dd x ,
\]
the double integral being finite because $|K|\le1$ on a set of finite measure.

\emph{Step 2: subtraction.}
By Lemma~\ref{lem:plancherel}, for each fixed $x$ we have
$\int_{\RR}K(x-y)^2\dd y=\int_{\RR}K(s)^2\dd s=1$ (translation invariance of
Lebesgue measure), so
$c=\int_0^c\bigl(\int_{\RR}K(x-y)^2\dd y\bigr)\dd x$.
All the integrands are nonnegative and the inner integrals are finite, so
subtracting Step~1 from \eqref{eq:A1} splits the inner integral over $\RR$
into the part over $(0,c)$ and the part over its complement:
\[
  V_c=\Tr S_c-\Tr S_c^2
  =\int_0^c\!\!\int_{\RR\setminus(0,c)}K(x-y)^2\dd y\dd x .
\]
This is the first equality in \eqref{eq:varid}.

\emph{Step 3: reflection and change of variables.}
Fix $x\in(0,c)$ and substitute $s=y-x$ in the inner integral. As $y$ runs over
$\RR\setminus(0,c)$, $s$ runs over $\RR\setminus(-x,\,c-x)$, i.e.\ over
$(-\infty,-x]\cup[c-x,\infty)$. Since $K$ is even, $K^2$ is even, and
$\int_{-\infty}^{-x}K(s)^2\dd s=\int_x^{\infty}K(s)^2\dd s$ by the reflection
$s\mapsto-s$. Writing $G(a)=\int_a^{\infty}K(s)^2\dd s$ for $a\ge0$, we obtain
\[
  V_c=\int_0^c\bigl[G(x)+G(c-x)\bigr]\dd x=2\int_0^cG(x)\dd x ,
\]
the last step by the measure-preserving change of variables $x\mapsto c-x$ on
$(0,c)$.

\emph{Step 4: Tonelli.}
The function $(x,s)\mapsto K(s)^2\chi_{\{0<x<c\}}\chi_{\{s>x\}}$ is
nonnegative and measurable on $(0,c)\times(0,\infty)$, so Tonelli's theorem
allows the order of integration to be exchanged:
\[
  2\int_0^cG(x)\dd x
  =2\int_0^c\!\!\int_x^{\infty}K(s)^2\dd s\dd x
  =2\int_0^{\infty}K(s)^2\,\bigl|\{x\in(0,c):x<s\}\bigr|\dd s .
\]
Since $|\{x\in(0,c):x<s\}|=\min\{c,s\}$, the right-hand side equals
$2\int_0^{\infty}\min\{c,s\}K(s)^2\dd s$, which is the second equality in
\eqref{eq:varid}.

\emph{Step 5: explicit form.}
Split at $s=c$ and insert $K(s)^2=\sin^2(\pi s)/(\pi^2s^2)$:
\[
  2\int_0^c s\,\frac{\sin^2(\pi s)}{\pi^2s^2}\dd s
  +2\int_c^{\infty}c\,\frac{\sin^2(\pi s)}{\pi^2s^2}\dd s
  =\frac2{\pi^2}\int_0^c\frac{\sin^2(\pi s)}{s}\dd s
  +\frac{2c}{\pi^2}\int_c^{\infty}\frac{\sin^2(\pi s)}{s^2}\dd s .
\]
Both integrals converge: near $s=0$ one has
$\sin^2(\pi s)/s\le\pi^2s$, and the tail integrand is at most $s^{-2}$. This is
\eqref{eq:A2}.
\end{proof}

\begin{lemma}[An oscillatory integral bound]
\label{lem:cos}
For every $c\ge1$,
\begin{equation}
  \Bigl|\int_1^c\frac{\cos(2\pi s)}{s}\dd s\Bigr|\le\frac1\pi .
  \label{eq:cosbound}
\end{equation}
\end{lemma}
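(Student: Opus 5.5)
Integrate by parts once. On $[1,c]$ we write $\cos(2\pi s)=\frac{d}{ds}\frac{\sin(2\pi s)}{2\pi}$, which gives
\[
  \int_1^c\frac{\cos(2\pi s)}{s}\dd s
  =\Bigl[\frac{\sin(2\pi s)}{2\pi s}\Bigr]_1^c
   +\frac1{2\pi}\int_1^c\frac{\sin(2\pi s)}{s^2}\dd s .
\]
The boundary term at $s=1$ vanishes because $\sin(2\pi)=0$. At $s=c$ it is bounded in modulus by $\frac{1}{2\pi c}$.

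For the remaining integral, the crude estimate $|\sin|\le1$ gives
\[
  \frac1{2\pi}\int_1^c s^{-2}\dd s=\frac{1}{2\pi}\Bigl(1-\frac1c\Bigr).
\]
Adding the two contributions, the total is at most
\[
  \frac{1}{2\pi c}+\frac{1}{2\pi}-\frac{1}{2\pi c}=\frac{1}{2\pi}\le\frac1\pi .
\]
So the bound \eqref{eq:cosbound} holds with room to spare, and in fact with constant $1/(2\pi)$.

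Since the integrand is continuous on the compact interval $[1,c]$, the integration by parts is fully justified and no limiting argument is needed. The case $c=1$ is trivial.

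There is no real obstacle. The only point needing care is to keep the exact boundary term $-\frac{1}{2\pi c}$ coming from $\int_1^c s^{-2}\dd s$, so that it cancels against the boundary term at $c$. If one prefers not to track this cancellation, the sloppier bound $\frac{1}{2\pi}+\frac{1}{2\pi}=\frac1\pi$ also suffices, and it gives exactly the stated constant.
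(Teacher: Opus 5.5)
Your proof is correct and follows the paper's argument: one integration by parts, the vanishing boundary term at $s=1$, and the bound $|\sin|\le1$ on the remaining integral. The only difference is that you keep the exact value $\frac{1}{2\pi}\bigl(1-\frac1c\bigr)$ of the remainder bound, whereas the paper bounds both the boundary term and the remainder by $\frac1{2\pi}$. This gives you the sharper constant $\frac1{2\pi}$ instead of the paper's $\frac1\pi$.
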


\begin{proof}
Integrate by parts with $u=1/s$ and $dv=\cos(2\pi s)\dd s$, so that
$v=\sin(2\pi s)/(2\pi)$:
\[
  \int_1^c\frac{\cos(2\pi s)}{s}\dd s
  =\Bigl[\frac{\sin(2\pi s)}{2\pi s}\Bigr]_1^c
   +\frac1{2\pi}\int_1^c\frac{\sin(2\pi s)}{s^2}\dd s .
\]
Since $\sin(2\pi)=0$, the boundary term equals $\sin(2\pi c)/(2\pi c)$, whose
modulus is at most $1/(2\pi c)\le1/(2\pi)$ for $c\ge1$. The remaining term is
bounded by
\[
  \frac1{2\pi}\int_1^c\frac{\dd s}{s^2}\le\frac1{2\pi}\int_1^{\infty}\frac{\dd s}{s^2}
  =\frac1{2\pi} .
\]
Adding the two bounds gives \eqref{eq:cosbound}.
\end{proof}

\begin{theorem}[Explicit logarithmic lower bound for the variance]
\label{thm:A3}
For $c\ge1$,
\begin{equation}
  V_c\ \ge\ \frac1{\pi^2}\log c-\frac1{\pi^3} ,
  \label{eq:Vlower}
\end{equation}
and consequently
\begin{equation}
  V_c\ \ge\ \frac1{2\pi^2}\log c
  \qquad\bigl(c\ge e^{2/\pi}\bigr).
  \tag{A3}
  \label{eq:A3}
\end{equation}
Moreover
\begin{equation}
  V_c=\frac1{\pi^2}\log c+O(1)\qquad(c\ge1).
  \tag{E2}
  \label{eq:E2}
\end{equation}
\end{theorem}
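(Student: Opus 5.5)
We start from the explicit form \eqref{eq:A2} of Theorem~\ref{thm:A2}. The second term is nonnegative, so for the lower bound we drop it:
\[
V_c\ge\frac2{\pi^2}\int_0^c\frac{\sin^2(\pi s)}{s}\dd s .
\]
We then discard the nonnegative piece over $(0,1)$ as well, keeping only $\int_1^c$. On $(1,c)$ we write $\sin^2(\pi s)=\tfrac12\bigl(1-\cos(2\pi s)\bigr)$, which gives
\[
\frac2{\pi^2}\int_1^c\frac{\sin^2(\pi s)}{s}\dd s=\frac1{\pi^2}\log c-\frac1{\pi^2}\int_1^c\frac{\cos(2\pi s)}{s}\dd s .
\]
Lemma~\ref{lem:cos} bounds the last integral in modulus by $1/\pi$. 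This yields $V_c\ge\pi^{-2}\log c-\pi^{-3}$, which is \eqref{eq:Vlower}.

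For \eqref{eq:A3} we simply observe that $\pi^{-2}\log c-\pi^{-3}\ge\tfrac1{2\pi^2}\log c$ exactly when $\tfrac1{2\pi^2}\log c\ge\pi^{-3}$, i.e.\ when $\log c\ge 2/\pi$. This is the stated range $c\ge e^{2/\pi}$.

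For \eqref{eq:E2} we need a matching upper bound $V_c\le\pi^{-2}\log c+C$ for $c\ge1$. We bound the three pieces of \eqref{eq:A2} separately:
\begin{itemize}[leftmargin=2em]
\item On $(0,1)$, use $\sin^2(\pi s)/s\le\pi^2 s$, so this contribution is at most $\frac2{\pi^2}\cdot\frac{\pi^2}{2}=1$.
\item On $(1,c)$, the same cosine identity together with Lemma~\ref{lem:cos} bounds the contribution by $\pi^{-2}\log c+\pi^{-3}$.
\item For the tail, $\sin^2\le1$ gives $\frac{2c}{\pi^2}\int_c^\infty s^{-2}\dd s=\frac2{\pi^2}$.
\end{itemize}
Altogether $0\le V_c-\pi^{-2}\log c\le 1+\pi^{-3}+2\pi^{-2}$ from above. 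Combined with \eqref{eq:Vlower}, which gives $V_c-\pi^{-2}\log c\ge-\pi^{-3}$ from below, this shows $|V_c-\pi^{-2}\log c|$ is bounded uniformly in $c\ge1$, which is \eqref{eq:E2}.

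There is no real obstacle here: the only nonelementary input is the oscillatory bound, which Lemma~\ref{lem:cos} already supplies. The one point to watch is that the cosine identity is applied only on $(1,c)$, where Lemma~\ref{lem:cos} is valid; it must not be applied on $(0,c)$, since $\int_0^1 s^{-1}\dd s$ diverges.
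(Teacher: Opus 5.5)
Your proof is correct and follows the paper's argument essentially line by line. You restrict to $(1,c)$ and apply Lemma~\ref{lem:cos} to get \eqref{eq:Vlower}, and you use the same three-piece upper bound giving $V_c\le\pi^{-2}\log c+1+\pi^{-3}+2\pi^{-2}$. The one slip is the ``$0\le$'' in front of $V_c-\pi^{-2}\log c$ in your upper-bound chain, which nothing you proved supports. It should read $-\pi^{-3}\le$, which is what your next sentence actually uses, so \eqref{eq:E2} is unaffected.
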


\begin{proof}
\emph{Lower bound.}
Both terms on the right of \eqref{eq:A2} are nonnegative, so discarding the
tail term and restricting the first integral to $(1,c)$,
\[
  V_c\ \ge\ \frac2{\pi^2}\int_1^c\frac{\sin^2(\pi s)}{s}\dd s .
\]
Using $\sin^2(\pi s)=\tfrac12\bigl(1-\cos(2\pi s)\bigr)$,
\[
  \int_1^c\frac{\sin^2(\pi s)}{s}\dd s
  =\frac12\int_1^c\frac{\dd s}{s}-\frac12\int_1^c\frac{\cos(2\pi s)}{s}\dd s
  \ \ge\ \frac12\log c-\frac1{2\pi} ,
\]
by Lemma~\ref{lem:cos}. Multiplying by $2/\pi^2$ gives \eqref{eq:Vlower}.
For \eqref{eq:A3} it suffices that
$\tfrac1{2\pi^2}\log c\ge\tfrac1{\pi^3}$, i.e.\ $\log c\ge2/\pi$, i.e.\
$c\ge e^{2/\pi}$; note $e^{2/\pi}>1$, so \eqref{eq:Vlower} is available.

\emph{Upper bound.}
For the first term of \eqref{eq:A2}, split at $s=1$. On $(0,1)$ use
$\sin^2(\pi s)/s\le\pi^2s$, giving
$\int_0^1\sin^2(\pi s)s^{-1}\dd s\le\pi^2/2$. On $(1,c)$ use the same identity
and Lemma~\ref{lem:cos} in the opposite direction:
$\int_1^c\sin^2(\pi s)s^{-1}\dd s\le\tfrac12\log c+\tfrac1{2\pi}$. For the
second term of \eqref{eq:A2}, $\sin^2\le1$ gives
$\frac{2c}{\pi^2}\int_c^{\infty}s^{-2}\dd s=\frac{2}{\pi^2}$. Altogether
\[
  \frac1{\pi^2}\log c-\frac1{\pi^3}
  \ \le\ V_c\ \le\
  \frac1{\pi^2}\log c+\frac1{\pi^3}+1+\frac2{\pi^2} ,
\]
which is \eqref{eq:E2} with an explicit $O(1)$.
\end{proof}

\begin{corollary}[Unconditional linear-in-$u$ lower bound]
\label{cor:E3}
For $c\ge e^{2/\pi}$ and $u\ge2$,
\begin{equation}
  \Tr\varphi_u(S_c)\ \ge\ \frac{u}{4\pi^2}\log c .
  \tag{E3}
  \label{eq:E3}
\end{equation}
\end{corollary}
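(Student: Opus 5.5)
The plan is to combine the pointwise quadratic minorant \eqref{eq:E1} of Lemma~\ref{lem:E1} with the variance lower bound \eqref{eq:A3} of Theorem~\ref{thm:A3}, passing from scalars to operators through the functional calculus.

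First, by \eqref{eq:E1}, for every $x\in[0,1]$ and $u\ge0$ we have $\varphi_u(x)\ge(u-1+e^{-u})\,x(1-x)$. Since $\sigma(S_c)\subset[0,1]$ by Theorem~\ref{thm:factorization}, the functional calculus gives the operator inequality
\[
  \varphi_u(S_c)\ \ge\ \bigl(u-1+e^{-u}\bigr)\,(S_c-S_c^2).
\]
Both sides are positive and trace class: the left side by Lemma~\ref{lem:upper}, the right side as noted after Definition~\ref{def:V}. Traces are monotone in the operator order on positive trace-class operators; equivalently, one can sum the scalar inequality over the eigenvalues $\lambda_j(c)$, since both sides are diagonal in the eigenbasis of $S_c$. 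This yields
\[
  \Tr\varphi_u(S_c)\ \ge\ \bigl(u-1+e^{-u}\bigr)V_c .
\]

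Next, we bound the scalar prefactor. For $u\ge2$ we have $u-1+e^{-u}\ge u-1\ge u/2$, because $u-1\ge u/2$ is equivalent to $u\ge2$.

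Finally, for $c\ge e^{2/\pi}$, \eqref{eq:A3} gives $V_c\ge\log c/(2\pi^2)\ge0$. Multiplying the two nonnegative lower bounds gives $\Tr\varphi_u(S_c)\ge (u/2)\cdot\log c/(2\pi^2)=u\log c/(4\pi^2)$, which is \eqref{eq:E3}.

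No step is a genuine obstacle. The only point needing care is the passage from the pointwise inequality to the trace inequality, and it is settled by working in the eigenbasis of the compact self-adjoint operator $S_c$, where all three operators $\varphi_u(S_c)$, $S_c$ and $S_c^2$ are diagonal.
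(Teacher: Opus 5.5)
Your proof is correct and follows the paper's own argument: the quadratic minorant \eqref{eq:E1}, functional calculus on $\sigma(S_c)\subset[0,1]$, monotonicity of the trace, and the variance bound \eqref{eq:A3}. The only differences are cosmetic. You bound the prefactor by $u-1+e^{-u}\ge u-1\ge u/2$, which is quicker than the paper's monotonicity argument for $u/2-1+e^{-u}$. You also apply that bound after taking traces rather than before.
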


\begin{proof}
First, $u-1+e^{-u}\ge u/2$ for $u\ge2$: the function
$g(u)=u/2-1+e^{-u}$ satisfies $g(2)=e^{-2}>0$ and
$g'(u)=\tfrac12-e^{-u}\ge\tfrac12-e^{-2}>0$ for $u\ge2$, so $g\ge0$ there.
Combining with \eqref{eq:E1}, for all $x\in[0,1]$,
\[
  \varphi_u(x)\ \ge\ \frac u2\,x(1-x)\qquad(u\ge2).
\]
Since $\sigma(S_c)\subset[0,1]$, the functional calculus turns this pointwise
inequality between continuous functions into the operator inequality
$\varphi_u(S_c)\ge\tfrac u2\bigl(S_c-S_c^2\bigr)\ge0$. Both sides are positive
trace-class operators (Lemma~\ref{lem:upper} for the left side), and the trace
is monotone on positive operators, so
\[
  \Tr\varphi_u(S_c)\ \ge\ \frac u2\,V_c\ \ge\ \frac u2\cdot\frac{\log c}{2\pi^2}
  =\frac{u}{4\pi^2}\log c
\]
by \eqref{eq:A3}.
\end{proof}

\section{A logistic window and its conditional counting consequence}
\label{sec:logistic}

\begin{definition}
\label{def:q}
For $t\in\RR$ put $q(t)=\dfrac1{1+e^t}\in(0,1)$. The map $q$ is a strictly
decreasing real-analytic bijection of $\RR$ onto $(0,1)$. For $t\in\RR$ define
the eigenvalue counting function
\begin{equation}
  N_c(t)=\#\bigl\{j:\lambda_j(c)>q(t)\bigr\},
  \label{eq:Nc}
\end{equation}
eigenvalues being counted with multiplicity. Each $N_c(t)$ is finite because
$S_c$ is trace class with nonnegative eigenvalues, and $N_c$ is nondecreasing
in $t$ because $q$ is decreasing.
\end{definition}

\begin{lemma}[Value of $\varphi_u$ on the logistic window]
\label{lem:E4}
For all $u\ge0$ and $t\in\RR$,
\begin{equation}
  \varphi_u\bigl(q(t)\bigr)
  =\log\frac{e^u+e^t}{1+e^t}-\frac{u}{1+e^t} .
  \label{eq:phiq}
\end{equation}
If $1\le t\le u/2$, then
\begin{equation}
  \varphi_u\bigl(q(t)\bigr)\ \ge\ \frac u2-e^{-1}-\frac{u}{1+e} .
  \label{eq:window1}
\end{equation}
Consequently, for $u\ge8$,
\begin{equation}
  \varphi_u\bigl(q(t)\bigr)\ \ge\ \frac u8,
  \qquad 1\le t\le u/2 .
  \tag{E4}
  \label{eq:E4}
\end{equation}
\end{lemma}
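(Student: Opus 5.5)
The identity \eqref{eq:phiq} follows by direct substitution, and the two inequalities follow from elementary one-variable estimates in the window $1\le t\le u/2$.

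\emph{Identity.} With $x=q(t)=1/(1+e^t)$ we have
\[
1+(e^u-1)x=\frac{1+e^t+e^u-1}{1+e^t}=\frac{e^u+e^t}{1+e^t},
\]
and $ux=u/(1+e^t)$. Substituting into \eqref{eq:phi} gives \eqref{eq:phiq}. No restriction on $t$ is needed.

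\emph{Window bound \eqref{eq:window1}.} I will bound the two terms of \eqref{eq:phiq} separately.

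For the logarithm, drop $e^t$ in the numerator, which only decreases it:
\[
\log\frac{e^u+e^t}{1+e^t}\ge u-\log(1+e^t)=u-t-\log(1+e^{-t}).
\]
Now use $\log(1+y)\le y$ with $y=e^{-t}\le e^{-1}$ (since $t\ge1$), and $t\le u/2$. This gives a lower bound of
\[
u-\frac u2-e^{-1}=\frac u2-e^{-1}.
\]

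For the subtracted term, $u/(1+e^t)$ is decreasing in $t$, so for $t\ge1$ it is at most $u/(1+e)$.

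Combining the two estimates yields \eqref{eq:window1}. The only point needing care is to use the monotonicity of $u/(1+e^t)$ in the correct direction, together with the two endpoints of the window: $t\ge1$ enters the error terms and $t\le u/2$ enters the main term.

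\emph{Consequence \eqref{eq:E4}.} It suffices to check that for $u\ge8$
\[
\frac u2-\frac u{1+e}-e^{-1}\ge\frac u8,
\]
that is,
\[
u\Bigl(\frac38-\frac1{1+e}\Bigr)\ge e^{-1}.
\]
Since $1/(1+e)\approx0.269<3/8=0.375$, the coefficient is positive, approximately $0.106$. The left side is increasing in $u$, so it is enough to verify the case $u=8$. There the left side is about $0.85$, which exceeds $e^{-1}\approx0.368$.

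The window is nonempty exactly when $u\ge2$, and that is automatic here. The main obstacle is only to keep the constant numerics explicit and rigorous. For this I would use the crude bounds $e>2.7$, so that $1/(1+e)<1/3.7<0.271$, and then check $8(0.375-0.271)>0.8>e^{-1}$.
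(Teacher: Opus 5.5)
Your proposal is correct and follows the paper's proof essentially step for step: the substitution for \eqref{eq:phiq}, dropping $e^t$ in the numerator, the bound $\log(1+e^{-t})\le e^{-t}\le e^{-1}$, monotonicity of $u/(1+e^t)$, and reduction of \eqref{eq:E4} to the endpoint $u=8$ via positivity of $\tfrac38-\tfrac1{1+e}$. The only difference is cosmetic: you check the endpoint with decimal bounds on $e$, whereas the paper clears denominators and verifies $3e^2-6e-1>0$ exactly; both checks are rigorous.
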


\begin{proof}
\emph{Identity.} With $x=q(t)=(1+e^t)^{-1}$,
\[
  1+(e^u-1)x=\frac{1+e^t+e^u-1}{1+e^t}=\frac{e^u+e^t}{1+e^t} ,
\]
and $ux=u/(1+e^t)$; substituting into \eqref{eq:phi} gives \eqref{eq:phiq}.

\emph{Window bound.} Assume $1\le t\le u/2$ (this range is nonempty only if
$u\ge2$, which holds in the situations where the bound is used). Since
$e^u+e^t\ge e^u$ and $\log$ is increasing,
\[
  \log\frac{e^u+e^t}{1+e^t}\ \ge\ u-\log(1+e^t) .
\]
Now $\log(1+e^t)=t+\log(1+e^{-t})\le t+e^{-t}$ by $\log(1+y)\le y$, and
$t\le u/2$ together with $t\ge1$ gives $\log(1+e^t)\le u/2+e^{-1}$. Hence the
first term of \eqref{eq:phiq} is at least $u/2-e^{-1}$. For the second term,
$t\ge1$ gives $1+e^t\ge1+e$, so $u/(1+e^t)\le u/(1+e)$. Subtracting yields
\eqref{eq:window1}.

\emph{Reduction to $u/8$.} By \eqref{eq:window1} it suffices to show
\[
  \Psi(u):=\frac u2-\frac{u}{1+e}-\frac1e-\frac u8\ \ge\ 0
  \qquad(u\ge8).
\]
Now $\Psi(u)=\gamma u-e^{-1}$ with
$\gamma=\tfrac12-\tfrac1{1+e}-\tfrac18=\tfrac38-\tfrac1{1+e}$. Since
$1+e<8/3$ would be needed for $\gamma\le0$, and in fact $1+e>3.7>8/3$, we have
$\tfrac1{1+e}<\tfrac38$, so $\gamma>0$ and $\Psi$ is strictly increasing. It
therefore suffices to check the endpoint $u=8$:
\[
  \Psi(8)=8\gamma-\frac1e=3-\frac8{1+e}-\frac1e .
\]
Multiplying by the positive number $e(1+e)$,
\[
  e(1+e)\,\Psi(8)=3e(1+e)-8e-(1+e)=3e+3e^2-8e-1-e=3e^2-6e-1 ,
\]
so
\[
  3-\frac8{1+e}-\frac1e>0
  \quad\Longleftrightarrow\quad
  3e^2-6e-1>0 .
\]
The right-hand inequality holds because the larger root of
$3z^2-6z-1$ is $z_+=1+\sqrt{4/3}<2.16<e$. Hence $\Psi(8)>0$ and
$\Psi(u)>0$ for all $u\ge8$, which is \eqref{eq:E4}.
\end{proof}

\begin{lemma}[Exact counting identity]
\label{lem:count}
For $u>2$,
\begin{equation}
  N_c(u/2)-N_c(1)
  =\#\bigl\{j:q(u/2)<\lambda_j(c)\le q(1)\bigr\} .
  \label{eq:countid}
\end{equation}
\end{lemma}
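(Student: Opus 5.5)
The identity is a set-theoretic statement about finite sets of indices, and I will prove it by comparing the two sets defining $N_c(u/2)$ and $N_c(1)$. For $u>2$ we have $u/2>1$. Since $q$ is strictly decreasing (Definition~\ref{def:q}), this gives $q(u/2)<q(1)$. Put
\[
  J_1=\{j:\lambda_j(c)>q(1)\},\qquad J_2=\{j:\lambda_j(c)>q(u/2)\}.
\]
Then $J_1\subset J_2$, because $\lambda_j(c)>q(1)$ implies $\lambda_j(c)>q(u/2)$. Both sets are finite: $S_c$ is trace class with nonnegative eigenvalues (Theorem~\ref{thm:factorization}), so only finitely many eigenvalues exceed any positive threshold. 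By definition $N_c(1)=\#J_1$ and $N_c(u/2)=\#J_2$.

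For finite sets with $J_1\subset J_2$ we have $\#J_2-\#J_1=\#(J_2\setminus J_1)$. It remains to identify the difference set:
\[
  J_2\setminus J_1=\{j:\lambda_j(c)>q(u/2)\ \text{and}\ \lambda_j(c)\le q(1)\}
  =\{j:q(u/2)<\lambda_j(c)\le q(1)\}.
\]
This is exactly the right-hand side of \eqref{eq:countid}. Eigenvalues are indexed with multiplicity, so counting indices $j$ counts eigenvalues with multiplicity, consistent with \eqref{eq:Nc}.

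There is no real obstacle here. The only points to check are the finiteness of the counts, which makes the subtraction legitimate and is supplied by trace-class membership, and the correct strict versus non-strict inequalities at the endpoints. The latter follow directly from the strict inequality ``$>$'' in the definition of $N_c$.
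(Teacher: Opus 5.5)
Your proposal is correct and follows the paper's proof essentially verbatim: strict monotonicity of $q$ gives the inclusion of the two finite index sets, and the difference of cardinalities is the cardinality of the set difference, which is exactly $\{j:q(u/2)<\lambda_j(c)\le q(1)\}$. Your remark that finiteness holds because $q(t)>0$ and $S_c$ is trace class with nonnegative eigenvalues matches the justification given in Definition~\ref{def:q}.
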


\begin{proof}
Since $u/2>1$ and $q$ is strictly decreasing, $q(u/2)<q(1)$, so
\[
  \bigl\{j:\lambda_j(c)>q(1)\bigr\}\subseteq\bigl\{j:\lambda_j(c)>q(u/2)\bigr\} ,
\]
and both sets are finite. The difference of their cardinalities is the
cardinality of the set-theoretic difference, namely
$\{j:\lambda_j(c)>q(u/2)\}\setminus\{j:\lambda_j(c)>q(1)\}
=\{j:q(u/2)<\lambda_j(c)\le q(1)\}$.
\end{proof}

We now record the first of the two conditional routes to \eqref{eq:target}.
The hypothesis \eqref{eq:LC} below is \emph{not} proved in this note.

\begin{proposition}[Counting hypothesis implies the quadratic bound]
\label{prop:R1}
Let $a>0$, $L>0$, $\alpha>0$ and $c>1$ be given, and suppose that
\begin{equation}
  N_c(u/2)-N_c(1)\ \ge\ a\,u\log c
  \qquad\text{for } L\le u\le\alpha\log c .
  \tag{LC}
  \label{eq:LC}
\end{equation}
Then, for every $u$ with $\max\{L,8\}\le u\le\alpha\log c$,
\begin{equation}
  \Tr\varphi_u(S_c)\ \ge\ \frac a8\,u^2\log c .
  \tag{R1}
  \label{eq:R1}
\end{equation}
\end{proposition}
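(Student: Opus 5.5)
The plan is to combine the spectral representation of $\Tr\varphi_u(S_c)$ with the window bound \eqref{eq:E4} and the counting identity of Lemma~\ref{lem:count}.

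First, since $S_c$ is a positive compact self-adjoint trace-class operator (Theorem~\ref{thm:factorization}), and $\varphi_u(S_c)$ is trace class (Lemma~\ref{lem:upper}), we can write
\[
  \Tr\varphi_u(S_c)=\sum_{j}\varphi_u\bigl(\lambda_j(c)\bigr),
\]
with eigenvalues counted with multiplicity. Every term is nonnegative by Lemma~\ref{lem:E1}, since $\lambda_j(c)\in[0,1]$. We may therefore discard all indices except those in the window
\[
  J=\{j:q(u/2)<\lambda_j(c)\le q(1)\},
\]
which gives $\Tr\varphi_u(S_c)\ge\sum_{j\in J}\varphi_u(\lambda_j(c))$.

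Next, fix $u$ with $\max\{L,8\}\le u\le\alpha\log c$. Since $u\ge8>2$, Lemma~\ref{lem:count} shows that $\#J=N_c(u/2)-N_c(1)$. For each $j\in J$, set $t_j=q^{-1}(\lambda_j(c))$; this is well defined because $q$ is a bijection of $\RR$ onto $(0,1)$ and $\lambda_j(c)\in(q(u/2),q(1)]\subset(0,1)$. Because $q$ is strictly decreasing, $1\le t_j<u/2$. Then \eqref{eq:E4}, valid for $u\ge8$, gives $\varphi_u(\lambda_j(c))=\varphi_u(q(t_j))\ge u/8$. Hence
\[
  \Tr\varphi_u(S_c)\ \ge\ \frac u8\bigl(N_c(u/2)-N_c(1)\bigr)\ \ge\ \frac u8\cdot a\,u\log c,
\]
where the last step uses \eqref{eq:LC}, which applies since $L\le u\le\alpha\log c$. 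This is \eqref{eq:R1}.

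There is no real obstacle here. The only points needing care are to justify the trace as an eigenvalue sum via the functional calculus for compact self-adjoint operators, and to check that the half-open window $(q(u/2),q(1)]$ corresponds exactly to $t\in[1,u/2)$, which lies inside the range where \eqref{eq:E4} holds. Note that strict positivity of the eigenvalues (Remark~\ref{rem:strict}) is not needed, since the window already excludes $0$.
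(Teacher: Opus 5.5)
Your proposal is correct and follows essentially the same argument as the paper: drop the nonnegative eigenvalue contributions outside the window $(q(u/2),q(1)]$, and write each remaining eigenvalue as $\lambda_j(c)=q(t_j)$ with $t_j\in[1,u/2)$. Each remaining term is then at least $u/8$ by \eqref{eq:E4}, and Lemma~\ref{lem:count} together with \eqref{eq:LC} gives \eqref{eq:R1}.
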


\begin{proof}
Fix such a $u$; in particular $u\ge8>2$. By the spectral theorem for the
compact self-adjoint operator $S_c$ and Lemma~\ref{lem:upper},
\[
  \Tr\varphi_u(S_c)=\sum_{j\ge1}\varphi_u\bigl(\lambda_j(c)\bigr),
\]
an absolutely convergent sum of \emph{nonnegative} terms, since
$\lambda_j(c)\in[0,1]$ and $\varphi_u\ge0$ there by Lemma~\ref{lem:E1}.
Discarding a subfamily of a series with nonnegative terms only decreases it,
so
\[
  \Tr\varphi_u(S_c)\ \ge\
  \sum_{j\,:\,q(u/2)<\lambda_j(c)\le q(1)}\varphi_u\bigl(\lambda_j(c)\bigr) .
\]
This is precisely the statement that all omitted eigenvalue contributions are
nonnegative and may be dropped.

For each index $j$ in the displayed range, $\lambda_j(c)\in(q(u/2),q(1)]$;
because $q$ is a strictly decreasing bijection of $\RR$ onto $(0,1)$, there is
a unique $t_j\in[1,u/2)$ with $\lambda_j(c)=q(t_j)$. By \eqref{eq:E4},
$\varphi_u(\lambda_j(c))=\varphi_u(q(t_j))\ge u/8$. Hence, using
Lemma~\ref{lem:count} and then \eqref{eq:LC},
\begin{align*}
  \Tr\varphi_u(S_c)
  &\ \ge\ \frac u8\,\#\bigl\{j:q(u/2)<\lambda_j(c)\le q(1)\bigr\}
   =\frac u8\bigl(N_c(u/2)-N_c(1)\bigr)\\
  &\ \ge\ \frac u8\cdot a\,u\log c=\frac a8\,u^2\log c . \qedhere
\end{align*}
\end{proof}

\begin{proposition}[A mean-type counting hypothesis implies \eqref{eq:LC}]
\label{prop:A4}
Let $C\ge0$, $\beta>0$, $c>1$, and suppose that
\begin{equation}
  \Bigl|N_c(t)-c-\frac{t}{\pi^2}\log c\Bigr|\ \le\ C\log c,
  \qquad 1\le t\le\beta\log c .
  \tag{MT}
  \label{eq:MT}
\end{equation}
Set
\begin{equation}
  a=\frac1{4\pi^2},\qquad
  L=4+8\pi^2C,\qquad
  0<\alpha\le2\beta .
  \tag{A4}
  \label{eq:A4}
\end{equation}
Then for every $u$ with $L\le u\le\alpha\log c$ (a range that may be empty, in
which case there is nothing to prove) both $t=1$ and $t=u/2$ lie in the range
allowed by \eqref{eq:MT}, one has
\begin{equation}
  N_c(u/2)-N_c(1)\ \ge\
  \Bigl(\frac{u}{2\pi^2}-\frac1{\pi^2}-2C\Bigr)\log c ,
  \label{eq:MTdiff}
\end{equation}
and \eqref{eq:LC} holds with the constants \eqref{eq:A4}.
\end{proposition}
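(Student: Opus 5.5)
The argument is direct: evaluate \eqref{eq:MT} at the two endpoints $t=1$ and $t=u/2$ and subtract, so that the main term $c$ cancels and two error terms of size $C\log c$ remain.

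First I check that both endpoints lie in the admissible range of \eqref{eq:MT}. Fix $u$ with $L\le u\le\alpha\log c$. Since $L=4+8\pi^2C\ge4$, we have $u/2\ge2\ge1$. From $\alpha\le2\beta$ we get $u/2\le\alpha\log c/2\le\beta\log c$. The point $t=1$ is also admissible, because $1\le u/2\le\beta\log c$ gives $1\le\beta\log c$. So \eqref{eq:MT} applies at both $t=1$ and $t=u/2$.

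Next I apply the upper bound of \eqref{eq:MT} at $t=1$ and the lower bound at $t=u/2$:
\[
N_c(u/2)\ge c+\frac{u}{2\pi^2}\log c-C\log c,\qquad N_c(1)\le c+\frac{1}{\pi^2}\log c+C\log c .
\]
Subtracting the second inequality from the first cancels $c$ and gives \eqref{eq:MTdiff} directly.

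To obtain \eqref{eq:LC} with $a=1/(4\pi^2)$, it suffices that
\[
\frac{u}{2\pi^2}-\frac1{\pi^2}-2C\ge\frac{u}{4\pi^2},
\]
because $\log c>0$ for $c>1$. Multiplying through by $4\pi^2$, this is equivalent to $u\ge4+8\pi^2C=L$, which holds by assumption.

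No step here is a genuine obstacle. The only care needed is the range bookkeeping: confirming $1\le u/2\le\beta\log c$ requires both $L\ge2$ and $\alpha\le2\beta$. One should also note that if $L>\alpha\log c$ the range of $u$ is empty and the statement is vacuous.
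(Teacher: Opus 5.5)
Your proposal is correct and follows essentially the same route as the paper: you check that $t=1$ and $t=u/2$ are admissible, apply \eqref{eq:MT} from below at $t=u/2$ and from above at $t=1$, subtract so that $c$ cancels, and then compare with $au$ using $u\ge L$. The only cosmetic difference is that you get $1\le\beta\log c$ directly from the chain $1\le u/2\le\beta\log c$, whereas the paper derives it via $\log c\ge 4/\alpha\ge 2/\beta$.
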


\begin{proof}
\emph{Admissibility of the two thresholds.}
The value $t=1$ is admissible provided $1\le\beta\log c$, which is implied by
the nonemptiness of the range $L\le u\le\alpha\log c$: indeed then
$\alpha\log c\ge L\ge4$, so $\log c\ge4/\alpha\ge4/(2\beta)=2/\beta$ and
therefore $\beta\log c\ge2>1$. For $t=u/2$ we need $1\le u/2\le\beta\log c$.
The left inequality holds because $u\ge L\ge4$, so $u/2\ge2\ge1$. The right
inequality holds because $u\le\alpha\log c\le2\beta\log c$, so
$u/2\le\beta\log c$. Thus \eqref{eq:MT} may be applied at both thresholds.

\emph{The difference bound.}
Applying \eqref{eq:MT} at $t=u/2$ from below and at $t=1$ from above,
\[
  N_c(u/2)\ \ge\ c+\frac{u}{2\pi^2}\log c-C\log c,
  \qquad
  N_c(1)\ \le\ c+\frac1{\pi^2}\log c+C\log c .
\]
Subtracting, the terms $c$ cancel exactly and we obtain \eqref{eq:MTdiff}.

\emph{Deduction of \eqref{eq:LC}.}
With $a=1/(4\pi^2)$ we have
\[
  \Bigl(\frac{u}{2\pi^2}-\frac1{\pi^2}-2C\Bigr)-a\,u
  =\frac{u}{4\pi^2}-\frac1{\pi^2}-2C
  =\frac1{4\pi^2}\bigl(u-4-8\pi^2C\bigr)
  =\frac{u-L}{4\pi^2}\ \ge\ 0
\]
whenever $u\ge L$. Hence for $L\le u\le\alpha\log c$,
\[
  N_c(u/2)-N_c(1)\ \ge\
  \Bigl(\frac{u}{2\pi^2}-\frac1{\pi^2}-2C\Bigr)\log c\ \ge\ a\,u\log c ,
\]
using $\log c>0$. This is exactly \eqref{eq:LC}.
\end{proof}

\begin{remark}[Logical status of \eqref{eq:LC} and \eqref{eq:MT}]
\label{rem:notproved}
Neither \eqref{eq:LC} nor \eqref{eq:MT} is proved in this note. Both are
hypotheses. Propositions~\ref{prop:R1} and~\ref{prop:A4} are proved
implications: they say that \emph{if} the hypothesis holds for the stated
range of parameters, \emph{then} the stated conclusion holds. Chaining them,
\eqref{eq:MT} with constants $C,\beta$ yields \eqref{eq:LC} with the constants
\eqref{eq:A4}, and \eqref{eq:LC} in turn yields \eqref{eq:R1} for
$\max\{L,8\}\le u\le\alpha\log c$, i.e.\ \eqref{eq:target} with
$\kappa_T=a/8=1/(32\pi^2)$ and $L_T=\max\{4+8\pi^2C,8\}$. Nothing in
Sections~\ref{sec:factorization}--\ref{sec:variance} supplies either
hypothesis.
\end{remark}

\section{The positive moment expansion}
\label{sec:expansion}

\begin{theorem}[Pointwise expansion]
\label{thm:P1}
For every $u\in[0,\infty)$ and $x\in[0,1]$,
\begin{equation}
  \varphi_u(x)=\sum_{n=1}^{\infty}A_n(u)\,x(1-x)^n ,
  \tag{P1}
  \label{eq:P1}
\end{equation}
all terms being nonnegative.
\end{theorem}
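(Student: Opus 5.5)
The plan is to integrate the exact derivative formula \eqref{eq:dphi} of Lemma~\ref{lem:E1} in $u$, after first expanding it as a geometric series in $(1-x)$. Fix $x\in[0,1]$ and $t\ge0$. In the second form of \eqref{eq:dphi} the denominator can be rewritten as
\[
  e^{-t}+(1-e^{-t})x=1-(1-e^{-t})(1-x).
\]
Set $r=(1-e^{-t})(1-x)\in[0,1)$; we have $r<1$ because $1-e^{-t}<1$. The geometric series then gives
\[
  \partial_t\varphi_t(x)=x(1-x)(1-e^{-t})\sum_{m\ge0}r^m
  =\sum_{n\ge1}(1-e^{-t})^n\,x(1-x)^n ,
\]
after reindexing with $n=m+1$. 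Every term is nonnegative.

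Next integrate over $t\in[0,u]$, using $\varphi_0=0$ and the fundamental theorem of calculus exactly as in the proof of Lemma~\ref{lem:E1}. Because all terms are nonnegative, Tonelli's theorem (monotone convergence) lets us interchange sum and integral with no further justification:
\[
  \varphi_u(x)=\int_0^u\sum_{n\ge1}(1-e^{-t})^n x(1-x)^n\,dt=\sum_{n\ge1}A_n(u)\,x(1-x)^n .
\]
This is \eqref{eq:P1}, and its terms are nonnegative since $A_n(u)\ge0$ and $x(1-x)^n\ge0$.

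The endpoint cases need a separate check. At $x=0$ both sides vanish. At $x=1$ every term has a factor $(1-x)^n=0$, so the series is $0=\varphi_u(1)$. At $u=0$ every $A_n(0)=0$. None of these is a real issue, since the geometric-series step with $r<1$ holds uniformly in $x\in[0,1]$.

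The only step that needs care is the term-by-term integration. It rests on nonnegativity rather than uniform convergence: near $x=0$ and large $t$, the ratio $r$ approaches $1$, so uniform convergence could fail. Tonelli avoids this problem, and finiteness of the resulting sum follows because it equals the finite quantity $\varphi_u(x)$.
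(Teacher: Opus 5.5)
Your proof is correct and follows the paper's argument essentially step for step: rewrite the denominator in \eqref{eq:dphi} as $1-(1-e^{-t})(1-x)$, expand it as a geometric series, integrate term by term via Tonelli using $\varphi_0=0$, and check the endpoints. One small correction: your closing worry about uniform convergence is unfounded, because for fixed finite $u$ one has $r\le 1-e^{-t}\le 1-e^{-u}<1$ for all $t\in[0,u]$ and $x\in[0,1]$, so the series even converges uniformly. Tonelli makes this point moot in any case.
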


\begin{proof}
Fix $u\in[0,\infty)$, so that $r_u:=1-e^{-u}\in[0,1)$; the finiteness of $u$ is
used exactly here.

\emph{Step 1: the derivative as a geometric series.}
Rewrite the denominator in \eqref{eq:dphi}: since
$1-(1-e^{-u})(1-x)=1-(1-e^{-u})+(1-e^{-u})x=e^{-u}+(1-e^{-u})x$, we may write
\begin{equation}
  \partial_u\varphi_u(x)
  =x(1-x)\,\frac{1-e^{-u}}{1-(1-e^{-u})(1-x)} .
  \label{eq:dphigeo}
\end{equation}
For $x\in[0,1]$ and $u<\infty$,
\[
  0\le(1-e^{-u})(1-x)\le1-e^{-u}<1 ,
\]
so the geometric series $\sum_{n\ge0}\bigl[(1-e^{-u})(1-x)\bigr]^n$ converges
to $\bigl[1-(1-e^{-u})(1-x)\bigr]^{-1}$. Multiplying by the nonnegative factor
$x(1-x)(1-e^{-u})$ and reindexing,
\begin{equation}
  \partial_u\varphi_u(x)
  =\sum_{n=1}^{\infty}(1-e^{-u})^n\,x(1-x)^n .
  \label{eq:dphiseries}
\end{equation}

\emph{Step 2: term-by-term integration.}
For $0\le x\le1$ and $t\ge0$ each summand $(1-e^{-t})^nx(1-x)^n$ is
nonnegative and jointly measurable in $(n,t)$ (counting measure times Lebesgue
measure). Tonelli's theorem for nonnegative integrands therefore permits
interchanging $\sum_n$ and $\int_0^u\!\dd t$ with no further justification:
\[
  \varphi_u(x)=\int_0^u\partial_t\varphi_t(x)\dd t
  =\int_0^u\sum_{n=1}^{\infty}(1-e^{-t})^nx(1-x)^n\dd t
  =\sum_{n=1}^{\infty}\Bigl(\int_0^u(1-e^{-t})^n\dd t\Bigr)x(1-x)^n ,
\]
which is \eqref{eq:P1} by the definition \eqref{eq:TnAn} of $A_n(u)$. Here we
again used $\varphi_0(x)=0$ and the continuous differentiability of
$t\mapsto\varphi_t(x)$.

\emph{Step 3: the endpoints.}
At $x=0$ every summand vanishes and $\varphi_u(0)=\log1-0=0$. At $x=1$ every
summand vanishes because of the factor $(1-x)^n$ with $n\ge1$, and
$\varphi_u(1)=\log(1+e^u-1)-u=u-u=0$. So \eqref{eq:P1} holds at the endpoints
as well. Note also that Step~1 is valid at both endpoints: at $x=1$ the
denominator in \eqref{eq:dphigeo} equals $1$ and the numerator vanishes, and at
$x=0$ the denominator equals $e^{-u}>0$ and the numerator again vanishes.
\end{proof}

\begin{proposition}[Elementary bounds on the moments]
\label{prop:Tnbounds}
For every $n\ge1$ and $c>0$ the operator $S_c(I-S_c)^n$ is positive and trace
class, and
\begin{equation}
  0\ \le\ T_n(c)\ \le\ \Tr S_c=c .
  \label{eq:Tnbounds}
\end{equation}
Moreover $T_n(c)=\sum_{j\ge1}\lambda_j(c)\bigl(1-\lambda_j(c)\bigr)^n$.
\end{proposition}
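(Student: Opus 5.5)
The plan is to argue through the functional calculus for the bounded self-adjoint operator $S_c$, copying the pattern of Lemma~\ref{lem:upper}.

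\emph{Pointwise sandwich.} For fixed $n\ge1$ put $\psi_n(x)=x(1-x)^n$, a polynomial. On $[0,1]$ we have $0\le\psi_n(x)\le x$, since $0\le(1-x)^n\le1$ there. Theorem~\ref{thm:factorization} gives $\sigma(S_c)\subset[0,1]$.

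\emph{Operator inequality.} The functional calculus converts the sandwich into
\[
  0\le S_c(I-S_c)^n\le S_c
\]
in the operator order. Here $\psi_n(S_c)$ is literally the operator $S_c(I-S_c)^n$, because $\psi_n$ is a polynomial.

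\emph{Trace class and the bounds.} Trace-class membership can be shown in two ways.
\begin{itemize}
\item Directly: $S_c$ is trace class and $(I-S_c)^n$ is bounded, and the trace-class operators form an ideal.
\item By domination: a positive operator dominated by the trace-class operator $S_c$ is trace class.
\end{itemize}
Since the trace is monotone on positive trace-class operators, we get $0\le T_n(c)\le\Tr S_c=c$ by \eqref{eq:A1}. This is \eqref{eq:Tnbounds}.

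\emph{Eigenvalue formula.} By the spectral theorem for the compact self-adjoint $S_c$, there is an orthonormal eigenbasis $\{e_j\}$ of $\overline{\operatorname{ran}S_c}$ with $S_ce_j=\lambda_j(c)e_j$. Then $S_c(I-S_c)^ne_j=\lambda_j(1-\lambda_j)^ne_j$. On $\ker S_c$ the operator $S_c(I-S_c)^n$ vanishes, and that kernel is in fact trivial by Remark~\ref{rem:strict}, though we do not need this.

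Compute the trace of the positive trace-class operator in the orthonormal basis obtained by completing $\{e_j\}$ with a basis of $\ker S_c$. This gives
\[
  T_n(c)=\sum_j\lambda_j(c)\bigl(1-\lambda_j(c)\bigr)^n,
\]
a series of nonnegative terms that converges because it is dominated by $\sum_j\lambda_j=c$.

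No step is a real obstacle. The only point needing care is that the trace of a positive trace-class operator may be computed in any orthonormal basis, including one containing kernel vectors; this is standard.
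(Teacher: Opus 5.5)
Your proposal is correct and follows essentially the same route as the paper: the pointwise sandwich $0\le x(1-x)^n\le x$ on $[0,1]$, the functional calculus giving $0\le S_c(I-S_c)^n\le S_c$, domination by the trace-class $S_c$ for the trace bounds, and the spectral theorem for the eigenvalue series. Your extra remarks (the ideal-property alternative and computing the trace in a completed eigenbasis) are valid but not needed.
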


\begin{proof}
The function $f_n(x)=x(1-x)^n$ satisfies $0\le f_n(x)\le x$ on $[0,1]$, since
$0\le(1-x)^n\le1$ there. As $\sigma(S_c)\subset[0,1]$, the functional calculus
gives $0\le S_c(I-S_c)^n\le S_c$ in the operator order; the left inequality
shows positivity and the right one, combined with $\Tr S_c=c<\infty$, shows
trace-class membership together with \eqref{eq:Tnbounds}. The eigenvalue
formula is the spectral theorem applied to the continuous function $f_n$, the
resulting series converging absolutely because its terms are nonnegative with
sum at most $c$.
\end{proof}

\begin{theorem}[Trace expansion]
\label{thm:P2}
For every $u\in[0,\infty)$ and $c>0$,
\begin{equation}
  \Tr\varphi_u(S_c)=\sum_{n=1}^{\infty}A_n(u)\,T_n(c) ,
  \tag{P2}
  \label{eq:P2}
\end{equation}
both sides being finite; indeed
$\Tr\varphi_u(S_c)\le(e^u-1-u)c$ by Lemma~\ref{lem:upper}.
\end{theorem}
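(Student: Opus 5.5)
The plan is to reduce \eqref{eq:P2} to the eigenvalue level and then exchange two sums of nonnegative terms. Since $S_c$ is compact and self-adjoint with $\sigma(S_c)\subset[0,1]$, and $\varphi_u(S_c)$ is positive and trace class by Lemma~\ref{lem:upper}, the spectral theorem gives
\[
  \Tr\varphi_u(S_c)=\sum_{j\ge1}\varphi_u\bigl(\lambda_j(c)\bigr).
\]
This is a convergent series of nonnegative terms, as already used in the proof of Proposition~\ref{prop:R1}. One point to address is the kernel of $S_c$. If $0$ were an eigenvalue it would contribute $\varphi_u(0)=0$. Remark~\ref{rem:strict} excludes this anyway, but I would note that it is not needed, so the diagonalization on an orthonormal eigenbasis is harmless.

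Next, apply the pointwise expansion \eqref{eq:P1} of Theorem~\ref{thm:P1} at each $x=\lambda_j(c)\in[0,1]$:
\[
  \varphi_u\bigl(\lambda_j(c)\bigr)=\sum_{n\ge1}A_n(u)\,\lambda_j(c)\bigl(1-\lambda_j(c)\bigr)^n ,
\]
with every summand nonnegative. This yields a double series $\sum_j\sum_n$ of nonnegative terms. Tonelli's theorem for counting measure on $\mathbb{N}\times\mathbb{N}$ then lets me interchange the order of summation unconditionally:
\[
  \sum_j\sum_n A_n(u)\lambda_j(1-\lambda_j)^n
  =\sum_n A_n(u)\sum_j\lambda_j(1-\lambda_j)^n
  =\sum_n A_n(u)\,T_n(c).
\]
The last equality is the eigenvalue formula for $T_n(c)$ in Proposition~\ref{prop:Tnbounds}.

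Finiteness of both sides follows from the equality itself together with $\Tr\varphi_u(S_c)\le(e^u-1-u)c$ from \eqref{eq:tracefinite}.

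There is no serious obstacle. The only step needing care is justifying the trace as a sum over eigenvalues, since the eigenbasis must be complete in $L^2(0,c)$. I would handle this by taking an orthonormal eigenbasis of the compact self-adjoint operator (including its kernel, if any) and computing $\Tr\varphi_u(S_c)=\sum_k\langle\varphi_u(S_c)e_k,e_k\rangle$ on it. Since $\varphi_u(0)=0$, only the nonzero eigenvalues survive. The same remark applies to the $T_n$, because $f_n(0)=0$.
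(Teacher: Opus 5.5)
Your proposal is correct and follows the paper's proof essentially verbatim: you write $\Tr\varphi_u(S_c)$ as a sum over eigenvalues of the positive trace-class operator, insert \eqref{eq:P1} at each eigenvalue, swap the sums by Tonelli for counting measure, and identify the inner sums as $T_n(c)$ via Proposition~\ref{prop:Tnbounds}. Your remark that a possible kernel is harmless, since $\varphi_u(0)=0$ and $x(1-x)^n$ vanishes at $0$, is a valid extra precaution; the paper also sketches an eigenvalue-free variant, using monotone convergence of the partial sums $\Phi_N(S_c)\uparrow\varphi_u(S_c)$ and normality of the trace.
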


\begin{proof}
By Lemma~\ref{lem:upper}, $\varphi_u(S_c)$ is a positive trace-class operator,
so by the spectral theorem
$\Tr\varphi_u(S_c)=\sum_j\varphi_u(\lambda_j(c))$ with all terms nonnegative.
Insert \eqref{eq:P1} at each $x=\lambda_j(c)\in[0,1]$:
\[
  \Tr\varphi_u(S_c)
  =\sum_{j\ge1}\sum_{n\ge1}A_n(u)\,\lambda_j(c)\bigl(1-\lambda_j(c)\bigr)^n .
\]
This is a double series with nonnegative terms, so Tonelli's theorem for
counting measure (equivalently, the monotone convergence theorem applied to
the increasing partial sums) permits interchanging the two summations:
\[
  \Tr\varphi_u(S_c)
  =\sum_{n\ge1}A_n(u)\sum_{j\ge1}\lambda_j(c)\bigl(1-\lambda_j(c)\bigr)^n
  =\sum_{n\ge1}A_n(u)\,T_n(c) ,
\]
by Proposition~\ref{prop:Tnbounds}. The left-hand side is finite by
\eqref{eq:tracefinite}, hence so is the right-hand side.

Equivalently, and without passing through eigenvalues, put
\[
  \Phi_N(x)=\sum_{n=1}^{N}A_n(u)\,x(1-x)^n .
\]
These are continuous on $[0,1]$ and increase pointwise to $\varphi_u$ by
Theorem~\ref{thm:P1}, so $\Phi_N(S_c)$ increases to $\varphi_u(S_c)$ in the
operator order; the trace is normal on increasing nets of positive operators,
so it passes to the limit.
\end{proof}

\section{Fixed-index Landau--Widom asymptotics}
\label{sec:lw}

We now quote one external input, and we are careful to quote it only within
its own scope. We call $F\colon[0,1]\to\RR$ \emph{admissible} when $F$ belongs
to the class of functions for which Landau and Widom \cite{LandauWidom}
establish \eqref{eq:LW}; we do not restate their regularity hypotheses here,
and nothing in this note depends on their precise form, because the only
functions to which \eqref{eq:LW} is ever applied below are the polynomials
$F_n(x)=x(1-x)^n$ with $F_n(0)=0$, which lie in that class. For such $F$ the
integral in \eqref{eq:LW} converges absolutely, since $F(x)-xF(1)$ vanishes at
both $x=0$ and $x=1$ and the quotient by $x(1-x)$ stays bounded.

\begin{theorem}[Landau--Widom, cited; fixed $F$ only]
\label{thm:LW}
For each \emph{one fixed} admissible $F$,
\begin{equation}
  \Tr F(S_c)
  = cF(1)
  +\frac{\log(2\pi c)}{\pi^2}\int_0^1\frac{F(x)-xF(1)}{x(1-x)}\dd x
  +o(\log c)
  \qquad(c\to\infty).
  \tag{LW}
  \label{eq:LW}
\end{equation}
See \cite{LandauWidom}.
\end{theorem}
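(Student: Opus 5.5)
Since Theorem~\ref{thm:LW} is quoted from \cite{LandauWidom}, here is a plan for how one would reprove it in the generality actually used in this note, namely for polynomials $F$ with $F(0)=0$. It also covers continuous $F$ for which $(F(x)-xF(1))/(x(1-x))$ extends continuously to $[0,1]$. The key step I am relying on (the $\log c$ term for monomials) is not proved below; it is where the real analytic content lies.

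\emph{Step 1: reduction to monomials and approximation.} The right-hand side of \eqref{eq:LW} is linear in $F$, and the term $cF(1)$ is exactly $\Tr\bigl(F(1)S_c\bigr)$ by \eqref{eq:A1}. So it suffices to treat
\[
  G(x)=F(x)-xF(1)=x(1-x)g(x),
\]
with $g$ continuous on $[0,1]$. Approximate $g$ uniformly by polynomials $p$ with $\|g-p\|_\infty\le\varepsilon$. By the functional calculus and positivity,
\[
  \bigl|\Tr G(S_c)-\Tr\bigl(S_c(I-S_c)p(S_c)\bigr)\bigr|\le\varepsilon V_c .
\]
By \eqref{eq:E2} this error is $O(\varepsilon\log c)$, uniformly in $c\ge1$. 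The $\log c$ coefficient also moves by at most $\varepsilon/\pi^2$. Letting $\varepsilon\to0$ after $c\to\infty$, it therefore suffices to prove \eqref{eq:LW} for each $x^k$ with $k\ge1$. The case $k=1$ is \eqref{eq:A1}, and the case $k=2$ is already consistent with \eqref{eq:E2}. The target for general $k$ is
\[
  \Tr S_c^k=c-\frac{\log c}{\pi^2}\sum_{j=1}^{k-1}\frac1j+o(\log c),
\]
since $\int_0^1(x^k-x)/(x(1-x))\,dx=-\sum_{j=1}^{k-1}1/j$.

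\emph{Step 2: interior versus boundary decomposition.} Write $\Tr S_c^k$ as the $k$-fold integral of
\[
  K(x_1-x_2)\cdots K(x_k-x_1)\quad\text{over }(0,c)^k.
\]
Over $\RR^{k-1}$ in the differences, the full-line integral equals $1$ for each fixed $x_1$, because convolution powers of the band projection are idempotent. The deficit is therefore the integral over configurations in which some $x_j$ leaves $(0,c)$. As in the variance identity, Theorem~\ref{thm:A2}, I would show that this deficit splits into two endpoint contributions plus an $O(1)$ interaction term. The interaction is controlled by the $s^{-2}$ decay of $K^2$ used in Step~5 of that proof. Each endpoint contribution should be the truncation at scale $c$ of a half-line (Wiener--Hopf) quantity, namely the trace of $P_+ - (P_+\Pi P_+)^k$ restricted to $[0,c]$, where $\Pi$ is the band projection.

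\emph{Step 3 (main obstacle): the logarithmic coefficient.} The half-line quantity is a polynomial expression in the Hankel-type operator $H=P_+\Pi P_-$. Its kernel is asymptotically $\frac{\sin\pi(x+y)}{\pi(x+y)}$, which after averaging the oscillation behaves like the Carleman kernel $1/(x+y)$ on $[1,c]$. I would try the Mellin transform first. It diagonalizes the Carleman operator on $L^2(1,c)$ up to boundary errors, and the logarithmic variable then gives a spectral density of length $\log c$. Under this density, the Mellin multiplier, i.e.\ the symbol $\frac{1}{2\cosh(\pi\tau)}$ for each endpoint's $HH^*$, turns $\Tr\bigl(P_+-(P_+\Pi P_+)^k\bigr)$ into
\[
  \frac{\log c}{2\pi}\int_{\RR}\Bigl(1-\bigl(1-\lambda(\tau)\bigr)^{k}-\cdots\Bigr)d\tau .
\]
Changing variables to $x=\lambda(\tau)$ should produce $\frac{1}{2\pi^2}\sum_{j<k}\frac1j$ per endpoint, giving the claimed total.

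Making the Mellin diagonalization rigorous is the step I expect to be hardest. This means controlling the oscillatory part $\cos(\pi(x+y))$, the cut-offs at $1$ and $c$, and the interaction terms with $o(\log c)$ error; this is where Widom's and Landau--Widom's machinery is used.
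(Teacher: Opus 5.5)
The paper does not prove this statement. It imports it from Landau--Widom and uses it only for fixed $F$.

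Judged as a standalone argument, your proposal has a genuine gap, and it is the one you flag yourself.

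\textbf{What is correct.} Step~1 is right, and your target formula for $\Tr S_c^k$ is right. Since $|x(1-x)(g-p)|\le\varepsilon\,x(1-x)$ on $[0,1]$, the approximation error is at most $\varepsilon V_c$. By \eqref{eq:E2} this is $O(\varepsilon\log c)$, and \eqref{eq:E2} also settles the monomial $k=2$ outright.

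\textbf{What is missing.} Step~1 together with \eqref{eq:E2} proves \eqref{eq:LW} only for $F(x)=ax+bx^2$. That covers $F_1$, but none of the $F_n$, $n\ge2$, that Proposition~\ref{prop:P3} uses. The real content is the $\log c$ coefficient of $\Tr S_c^k$ for $k\ge3$. It sits in Steps~2--3, and neither step is carried out.

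Step~2 is also not a routine extension of Theorem~\ref{thm:A2}. For $k=2$ only one variable leaves $(0,c)$, so there is no interaction term at all. For $k\ge3$ the integrand is a signed product of distinct factors, each decaying only like $|s|^{-1}$. The $s^{-2}$ decay of $K^2$ is therefore not available. The decoupling needs its own estimate. So does the identification of each endpoint term with the local trace $\int_0^c(W-W^k)(x,x)\dd x$, where $W=P_+\Pi P_+$.

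\textbf{The Step~3 heuristic gets the symbol wrong.} The Mellin heuristic misidentifies the model, so it would not produce the right constant.
\begin{itemize}
\item Since $HH^*=W-W^2$, one has $0\le HH^*\le\frac14$. Your symbol $\frac1{2\cosh\pi\tau}$ reaches $\frac12$.
\item Check at $k=2$: your symbol and density give $\frac{\log c}{2\pi}\int_{\RR}\frac{\dd\tau}{2\cosh\pi\tau}=\frac{\log c}{4\pi}$ per endpoint. Theorem~\ref{thm:A2} gives $\frac{\log c}{2\pi^2}$ per endpoint, because $\int_x^\infty K^2\sim\frac1{2\pi^2x}$.
\item The mechanism is not an averaging of $\sin\pi(x+y)$ into a Carleman kernel. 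Instead, $\sin\pi(x+y)=\frac1{2i}\bigl(e^{i\pi x}e^{i\pi y}-e^{-i\pi x}e^{-i\pi y}\bigr)$ writes $H$ as two modulated Carleman operators, one for each band edge $\pm\pi$. It is the cross terms between them that are negligible.
\item The Carleman operator has symbol $\frac1{\cosh\pi\tau}$. So $\frac1{2\cosh\pi\tau}$ is the modulus of the symbol of each piece of $H$, not of $HH^*$.
\end{itemize}

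\textbf{The corrected model.} Near the edge $\pi$, after modulation, $W$ is the compression to $(0,\infty)$ of the projection onto negative frequencies. Via the compressed Hilbert transform, which has symbol $-i\tanh\pi\tau$ on $x^{-1/2+i\tau}$, its Mellin symbol is $w(\tau)=(1+e^{2\pi\tau})^{-1}$. Near $-\pi$ the symbol is $1-w(\tau)$. On each copy $HH^*$ then has symbol $w(1-w)=\frac1{4\cosh^2\pi\tau}$.

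Because $W-W^k$ is not a function of $W-W^2$, you must integrate against the symbol of $W$ itself, not that of $HH^*$. Per endpoint this gives $\frac{\log c}{2\pi}\int_{\RR}\bigl(1-w^k-(1-w)^k\bigr)\dd\tau$. The substitution $\dd\tau=-\frac{\dd w}{2\pi w(1-w)}$ converts this to $\frac{\log c}{2\pi^2}\sum_{j<k}\frac1j$, as required.

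Making this rigorous requires Mellin localization on $[1,c]$, control of the cross terms and the cut-offs, and an $o(\log c)$ error bound. That is exactly the Landau--Widom analysis the paper cites rather than reproves.
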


\begin{mdframed}[linewidth=0.8pt,backgroundcolor=black!3,roundcorner=3pt]
\textbf{Scope of \eqref{eq:LW}.} The remainder $o(\log c)$ is asserted for one
fixed function $F$: it means that there is a function
$\varepsilon_F(c)\to0$ with remainder bounded by $\varepsilon_F(c)\log c$, and
both $\varepsilon_F$ and any threshold $c\ge c_F$ beyond which a given
smallness holds are permitted to depend on $F$ in an arbitrary way. \emph{No
uniformity over families $F=F_n$ is assumed or used anywhere in this note.}
\end{mdframed}

\begin{proposition}[Fixed-$n$ moment asymptotics]
\label{prop:P3}
For each fixed integer $n\ge1$,
\begin{equation}
  T_n(c)=\frac{\log c}{\pi^2n}+o_n(\log c)\qquad(c\to\infty),
  \tag{P3}
  \label{eq:P3}
\end{equation}
where the subscript records that the implied remainder may depend on $n$.
\end{proposition}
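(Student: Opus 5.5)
We obtain \eqref{eq:P3} by applying Theorem~\ref{thm:LW} to the single fixed polynomial $F_n(x)=x(1-x)^n$. The paper has already declared this function admissible. By Proposition~\ref{prop:Tnbounds} and the functional calculus, $T_n(c)=\Tr F_n(S_c)$. Since $n\ge1$ we have $F_n(1)=0$, so the leading term $cF_n(1)$ in \eqref{eq:LW} vanishes. This cancellation of the linear-in-$c$ term is what makes $T_n$ a purely logarithmic quantity.

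It remains to evaluate the coefficient integral. With $F_n(1)=0$ the integrand is
\[
  \frac{F_n(x)-xF_n(1)}{x(1-x)}=\frac{x(1-x)^n}{x(1-x)}=(1-x)^{n-1},
\]
which is a bounded polynomial on $[0,1]$. Its integral is
\[
  \int_0^1(1-x)^{n-1}\dd x=\frac1n.
\]
Hence \eqref{eq:LW} gives
\[
  T_n(c)=\frac{\log(2\pi c)}{\pi^2 n}+o(\log c).
\]

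Finally we absorb the constant. We have $\log(2\pi c)=\log c+\log(2\pi)$. The extra term $\log(2\pi)/(\pi^2n)$ is $O(1)$, and $O(1)=o(\log c)$ as $c\to\infty$. This gives
\[
  T_n(c)=\frac{\log c}{\pi^2 n}+o_n(\log c).
\]
The remainder function $\varepsilon_{F_n}$ is inherited from \eqref{eq:LW} applied to one fixed $F_n$. Consistent with the scope box, it may depend on $n$ arbitrarily, which is exactly what the subscript $o_n$ records.

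There is no real obstacle. The only points needing care are the admissibility of $F_n$, which the paper has already granted, and keeping the remainder $n$-dependent. In particular, no uniformity in $n$ is claimed or needed.
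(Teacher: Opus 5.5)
Your proposal is correct and follows the paper's proof essentially step for step. You apply \eqref{eq:LW} to the single fixed polynomial $F_n(x)=x(1-x)^n$, use $F_n(1)=0$ and $\int_0^1(1-x)^{n-1}\,dx=1/n$, and absorb the constant $\log(2\pi)/(\pi^2 n)$ into the $n$-dependent $o_n(\log c)$ remainder.
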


\begin{proof}
Fix $n\ge1$ and take $F_n(x)=x(1-x)^n$, a polynomial with $F_n(0)=0$; it is
admissible. Then $T_n(c)=\Tr F_n(S_c)$ by Proposition~\ref{prop:Tnbounds}, and
\[
  F_n(1)=1\cdot0^n=0,
  \qquad
  \frac{F_n(x)-xF_n(1)}{x(1-x)}=\frac{x(1-x)^n}{x(1-x)}=(1-x)^{n-1}
  \quad(0<x<1),
\]
so that
\[
  \int_0^1\frac{F_n(x)-xF_n(1)}{x(1-x)}\dd x=\int_0^1(1-x)^{n-1}\dd x=\frac1n .
\]
Substituting into \eqref{eq:LW} and using
$\log(2\pi c)=\log c+\log(2\pi)$, whose constant contribution
$\log(2\pi)/(\pi^2n)$ is $O_n(1)=o_n(\log c)$, gives
\[
  T_n(c)=0+\frac{\log c}{\pi^2 n}+o_n(\log c) . \qedhere
\]
\end{proof}

\begin{mdframed}[linewidth=1pt,backgroundcolor=black!6,roundcorner=3pt]
\textbf{Fixed-index versus uniform asymptotics.}
Proposition~\ref{prop:P3} is a statement about each fixed $n$ separately. It
gives \emph{no} information whatsoever about $T_{n(c)}(c)$ for a moment index
$n=n(c)$ that grows with $c$, and in particular it does not bound $T_n(c)$
from below uniformly in the range $1\le n\le c^{\beta}$. Both the threshold
$c_n$ beyond which the remainder in \eqref{eq:P3} is small and the size of
that remainder may depend on $n$ arbitrarily badly.
\end{mdframed}

\section{The quantitative moment hypothesis (QM)}
\label{sec:qm}

\begin{unproved}[(QM)]
\label{hyp:QM}
There exist constants $a,\beta,c_0>0$ such that
\begin{equation}
  T_n(c)\ \ge\ a\,\frac{\log c}{n}
  \qquad\text{for all }c\ge c_0\text{ and all integers }1\le n\le c^{\beta} .
  \tag{QM}
  \label{eq:QM}
\end{equation}
\emph{This hypothesis is assumed, never proved, in this note.} It is a
uniform-in-$n$ statement; by the discussion in Section~\ref{sec:lw} it is
\emph{not} a consequence of \eqref{eq:P3}.
\end{unproved}

\begin{definition}[Truncation index]
\label{def:M}
To avoid any confusion with the counting function $N_c(t)$ of
Section~\ref{sec:logistic}, we denote the moment truncation index by
\begin{equation}
  M(u)=\bigl\lceil e^{2u}\bigr\rceil ,
  \label{eq:M}
\end{equation}
so that $e^{2u}\le M(u)\le e^{2u}+1$.
\end{definition}

\begin{lemma}[The harmonically weighted total mass]
\label{lem:harmonic}
For every $u\ge0$,
\begin{equation}
  \sum_{n=1}^{\infty}\frac{A_n(u)}{n}
  =\int_0^u\sum_{n=1}^{\infty}\frac{(1-e^{-t})^n}{n}\dd t
  =\int_0^ut\dd t=\frac{u^2}{2} .
  \label{eq:harmonic}
\end{equation}
\end{lemma}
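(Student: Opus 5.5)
The plan is to verify the three equalities in \eqref{eq:harmonic} in order, using only the definition \eqref{eq:TnAn} of $A_n(u)$ and the power series of the logarithm.

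\emph{First equality.} Insert the definition $A_n(u)=\int_0^u(1-e^{-t})^n\dd t$ and divide by $n$. Each integrand $(1-e^{-t})^n/n$ is nonnegative and jointly measurable in $(n,t)$ with respect to counting measure times Lebesgue measure. Tonelli's theorem therefore lets us interchange $\sum_n$ and $\int_0^u$, exactly as in Step~2 of the proof of Theorem~\ref{thm:P1}. This holds as an identity in $[0,\infty]$, so finiteness does not have to be checked in advance.

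\emph{Second equality.} Fix $t\in[0,u]$ and put $r=1-e^{-t}\in[0,1)$. The series $\sum_{n\ge1}r^n/n$ converges to $-\log(1-r)$ for $|r|<1$, and here $-\log(1-r)=-\log e^{-t}=t$. At $t=0$ both sides vanish. Hence the inner sum equals $t$ pointwise on $[0,u]$; the finiteness of $u$ guarantees $r<1$, just as in Theorem~\ref{thm:P1}.

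\emph{Third equality.} This is $\int_0^u t\dd t=u^2/2$. It also shows a posteriori that the series $\sum_n A_n(u)/n$ is finite.

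There is no real obstacle. The only point needing care is to justify the interchange of sum and integral without any domination argument, and nonnegativity together with Tonelli settles this. For transparency one can also note the alternative route through the derivative formula \eqref{eq:dphiseries}, but the direct logarithmic series argument is shorter and is the one I would write.
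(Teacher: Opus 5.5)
Your proposal is correct and follows the paper's proof essentially step for step. You use Tonelli for nonnegative terms to swap $\sum_n$ and $\int_0^u$, then the Mercator series at $r_t=1-e^{-t}\in[0,1)$ to get $-\log(e^{-t})=t$, and finally $\int_0^u t\,dt=u^2/2$; your remark that the Tonelli step holds in $[0,\infty]$, so finiteness follows a posteriori, is a point the paper leaves implicit.
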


\begin{proof}
All terms $(1-e^{-t})^n/n$ are nonnegative and jointly measurable, so
Tonelli's theorem justifies exchanging $\sum_n$ with $\int_0^u\dd t$; together
with the definition of $A_n(u)$ this gives the first equality. For the second,
fix $t\in[0,u]$ and set $r_t=1-e^{-t}\in[0,1)$. The Mercator series
$\sum_{n\ge1}z^n/n=-\log(1-z)$ converges for $|z|<1$, so
\[
  \sum_{n=1}^{\infty}\frac{r_t^{\,n}}{n}=-\log(1-r_t)=-\log\bigl(e^{-t}\bigr)=t .
\]
Integrating $t$ over $[0,u]$ gives $u^2/2$.
\end{proof}

\begin{lemma}[Tail estimate and the truncated mass]
\label{lem:P4}
Let $u\ge0$, $0\le t\le u$, and $r_t=1-e^{-t}$. Then
\begin{equation}
  \sum_{n>M(u)}\frac{r_t^{\,n}}{n}
  \ \le\ \frac{r_t^{\,M(u)+1}}{M(u)\,(1-r_t)}
  \ \le\ \frac{e^{t}}{M(u)}
  \ \le\ e^{-u} .
  \label{eq:tail}
\end{equation}
Consequently, for $u\ge2$,
\begin{equation}
  \sum_{n=1}^{M(u)}\frac{A_n(u)}{n}\ \ge\ \frac{u^2}{2}-ue^{-u}\ \ge\ \frac{u^2}{4} .
  \tag{P4}
  \label{eq:P4}
\end{equation}
\end{lemma}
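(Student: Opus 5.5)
The plan is to prove the three inequalities in \eqref{eq:tail} in order, and then obtain \eqref{eq:P4} by combining Tonelli with Lemma~\ref{lem:harmonic}.

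\emph{Tail bound \eqref{eq:tail}.} For $n>M(u)$ we have $1/n\le 1/M(u)$. With $r_t\in[0,1)$ this gives
\[
  \sum_{n>M(u)}\frac{r_t^{\,n}}{n}\le\frac1{M(u)}\sum_{n\ge M(u)+1}r_t^{\,n}=\frac{r_t^{\,M(u)+1}}{M(u)(1-r_t)} .
\]
For the second inequality, use $r_t^{\,M(u)+1}\le1$ and $1-r_t=e^{-t}$, which gives the bound $e^t/M(u)$. For the third, use $t\le u$ and $M(u)\ge e^{2u}$, so that $e^t/M(u)\le e^{u}e^{-2u}=e^{-u}$. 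The case $t=0$ is trivial, since then $r_t=0$ and every term vanishes.

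\emph{Truncated mass.} By Tonelli, exactly as in Lemma~\ref{lem:harmonic}, we can write
\[
  \sum_{n>M(u)}\frac{A_n(u)}{n}=\int_0^u\sum_{n>M(u)}\frac{r_t^{\,n}}{n}\dd t\le ue^{-u} .
\]
This step uses the $t$-uniform bound $e^{-u}$ from \eqref{eq:tail}. Subtracting it from the total mass $u^2/2$ in \eqref{eq:harmonic} yields the first inequality of \eqref{eq:P4}; the subtraction is legitimate because all quantities are finite and nonnegative.

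\emph{Final elementary inequality.} It remains to show $u^2/2-ue^{-u}\ge u^2/4$ for $u\ge2$, i.e.\ $ue^{-u}\le u^2/4$, i.e.\ $e^{-u}\le u/4$. At $u=2$ this reads $e^{-2}\le 1/2$, which holds. For $u\ge2$ the left side decreases and the right side increases, so the inequality persists.

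The only step needing care is the uniformity in $t$ of the tail bound, which is what allows the integration over $[0,u]$. That uniformity comes precisely from choosing $M(u)\ge e^{2u}$, so that $e^t/M(u)\le e^{-u}$ for every $t\le u$. Beyond that there is no real obstacle.
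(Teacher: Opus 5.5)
Your proposal is correct and follows essentially the same route as the paper. It uses the same geometric-series tail bound with the $t$-uniform estimate $e^{t}/M(u)\le e^{-u}$, then Tonelli to integrate over $t\in[0,u]$ against the total mass $u^2/2$ from Lemma~\ref{lem:harmonic}, and finally the elementary check at $u=2$ (your $e^{-u}\le u/4$ is the paper's $ue^{u}\ge4$), with only cosmetic differences in how the subtraction is arranged.
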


\begin{proof}
\emph{Tail.} Write $M=M(u)\ge1$. For $n>M$ we have $1/n<1/M$, so
\[
  \sum_{n>M}\frac{r_t^{\,n}}{n}
  \le\frac1M\sum_{n>M}r_t^{\,n}
  =\frac1M\cdot\frac{r_t^{\,M+1}}{1-r_t}
  =\frac{r_t^{\,M+1}}{M(1-r_t)} ,
\]
the geometric sum being convergent because $0\le r_t<1$. Since $r_t\le1$ and
$1-r_t=e^{-t}$, the middle quantity is at most $e^{t}/M$. Finally $t\le u$ and
$M=M(u)\ge e^{2u}$ give $e^t/M\le e^u\cdot e^{-2u}=e^{-u}$.

\emph{Truncated mass.} By Lemma~\ref{lem:harmonic} and Tonelli again,
\[
  \sum_{n=1}^{M(u)}\frac{A_n(u)}{n}
  =\int_0^u\sum_{n=1}^{M(u)}\frac{r_t^{\,n}}{n}\dd t
  =\frac{u^2}{2}-\int_0^u\sum_{n>M(u)}\frac{r_t^{\,n}}{n}\dd t
  \ \ge\ \frac{u^2}{2}-\int_0^ue^{-u}\dd t
  =\frac{u^2}{2}-ue^{-u} .
\]
For the last inequality in \eqref{eq:P4} it suffices that
$ue^{-u}\le u^2/4$, i.e.\ $ue^{u}\ge4$; the left side is increasing in $u$ and
equals $2e^2>14>4$ at $u=2$.
\end{proof}

\begin{proposition}[Conditional quadratic bound at a single scale]
\label{prop:EQM}
Assume \eqref{eq:QM} with constants $a,\beta,c_0$. Let $u\ge2$ and $c\ge c_0$
satisfy $M(u)\le c^{\beta}$. Then
\begin{equation}
  \Tr\varphi_u(S_c)\ \ge\ \frac a4\,u^2\log c .
  \tag{E1-QM}
  \label{eq:EQM}
\end{equation}
\end{proposition}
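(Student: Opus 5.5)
The plan is to start from the positive trace expansion \eqref{eq:P2} and discard all but finitely many terms. By Theorem~\ref{thm:P2},
\[
  \Tr\varphi_u(S_c)=\sum_{n\ge1}A_n(u)\,T_n(c),
\]
and every term is nonnegative. Here $A_n(u)\ge0$ by definition, and $T_n(c)\ge0$ by Proposition~\ref{prop:Tnbounds}. Truncating at the index $M(u)$ of Definition~\ref{def:M} therefore only decreases the sum:
\[
  \Tr\varphi_u(S_c)\ \ge\ \sum_{n=1}^{M(u)}A_n(u)\,T_n(c).
\]

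Next, apply \eqref{eq:QM} to each retained index. Since $c\ge c_0$ and every $n$ in the truncated range satisfies $1\le n\le M(u)\le c^{\beta}$, the hypothesis gives $T_n(c)\ge a\log c/n$ for all of them. This is exactly why $M(u)\le c^\beta$ is assumed. Multiplying by the nonnegative weights $A_n(u)$ and summing yields
\[
  \Tr\varphi_u(S_c)\ \ge\ a\log c\sum_{n=1}^{M(u)}\frac{A_n(u)}{n}.
\]

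Finally, invoke the truncated-mass bound \eqref{eq:P4} of Lemma~\ref{lem:P4}, which applies because $u\ge2$. It gives $\sum_{n\le M(u)}A_n(u)/n\ge u^2/4$. Using $\log c\ge0$ (note $c\ge c_0$; if $\log c<0$ the inequality $T_n\ge a\log c/n$ still holds, and the multiplication step needs $\log c\ge0$ only for the direction of the last inequality), we obtain \eqref{eq:EQM}.

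There is no real obstacle in this argument. The only points to check are the sign conditions that justify discarding terms and the requirement that all truncated indices lie inside the (QM) range. One detail needs a short remark: $M(u)\le c^\beta$ together with $M(u)\ge e^{4}>1$ forces $c>1$, hence $\log c>0$. This makes the final multiplication by $u^2/4$ legitimate.
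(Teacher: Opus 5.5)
Your proposal is correct and follows essentially the same route as the paper: truncate the positive expansion \eqref{eq:P2} at $M(u)$, apply \eqref{eq:QM} to each retained index, and finish with the truncated-mass bound \eqref{eq:P4}. Your observation that $M(u)\ge e^{4}>1$ together with $M(u)\le c^{\beta}$ forces $\log c>0$ is a small point the paper leaves implicit. The parenthetical about $\log c<0$ can simply be deleted.
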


\begin{proof}
By Theorem~\ref{thm:P2} and the nonnegativity of every term
$A_n(u)T_n(c)\ge0$ (Proposition~\ref{prop:Tnbounds}), truncating the series at
$M(u)$ can only decrease it:
\[
  \Tr\varphi_u(S_c)=\sum_{n=1}^{\infty}A_n(u)T_n(c)
  \ \ge\ \sum_{n=1}^{M(u)}A_n(u)T_n(c) .
\]
Every index $n$ in the truncated range satisfies $1\le n\le M(u)\le c^{\beta}$,
and $c\ge c_0$, so \eqref{eq:QM} applies to each of them and gives
$T_n(c)\ge a\log c/n$. Since $A_n(u)\ge0$,
\[
  \Tr\varphi_u(S_c)\ \ge\ a\log c\sum_{n=1}^{M(u)}\frac{A_n(u)}{n}
  \ \ge\ a\log c\cdot\frac{u^2}{4}
\]
by \eqref{eq:P4}, which requires $u\ge2$.
\end{proof}

\begin{lemma}[Admissible truncation range]
\label{lem:range}
Let $0<\alpha<\beta/2$. If $u\le\alpha\log c$ and $c>1$, then
\begin{equation}
  M(u)\le e^{2u}+1\le 2e^{2u}\le2c^{2\alpha} ,
  \label{eq:Mrange}
\end{equation}
and hence $M(u)\le c^{\beta}$ as soon as
$c\ge2^{1/(\beta-2\alpha)}$.
\end{lemma}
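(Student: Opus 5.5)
The plan is to prove the chain \eqref{eq:Mrange} one inequality at a time and then solve for the threshold on $c$.

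\emph{The chain.} The first inequality $M(u)\le e^{2u}+1$ is exactly Definition~\ref{def:M}, since $\lceil y\rceil\le y+1$. The second, $e^{2u}+1\le 2e^{2u}$, amounts to $e^{2u}\ge1$, which holds because $u\ge0$. (Here $u$ ranges over nonnegative reals, as it does throughout the paper.) For the third, $u\le\alpha\log c$ with $\alpha>0$ gives $2u\le 2\alpha\log c$. Since $\exp$ is increasing, $e^{2u}\le e^{2\alpha\log c}=c^{2\alpha}$, and multiplying by $2$ gives $2e^{2u}\le 2c^{2\alpha}$.

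\emph{The threshold.} It now suffices to show $2c^{2\alpha}\le c^{\beta}$. Since $c>1$, this is equivalent to $2\le c^{\beta-2\alpha}$. The hypothesis $\alpha<\beta/2$ gives $\gamma:=\beta-2\alpha>0$. The map $c\mapsto c^{\gamma}$ is increasing on $(0,\infty)$, so $c\ge 2^{1/\gamma}$ implies $c^{\gamma}\ge 2$. Chaining with \eqref{eq:Mrange} then yields $M(u)\le c^{\beta}$.

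There is no real obstacle. The only points needing care are the implicit assumption $u\ge0$ in the second step and the strict inequality $\alpha<\beta/2$, which makes the exponent $\gamma$ positive so that $2^{1/\gamma}$ is well defined and the monotonicity argument applies. The condition $c>1$ is used only so that $\log c>0$ and raising to positive powers preserves order, so no case split is needed.
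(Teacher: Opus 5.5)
Your proposal is correct and follows essentially the same route as the paper. You prove the chain one inequality at a time, using $\lceil y\rceil\le y+1$, then $e^{2u}\ge1$ for $u\ge0$, then monotonicity of $\exp$; you then reduce $2c^{2\alpha}\le c^{\beta}$ to $c^{\beta-2\alpha}\ge2$, which holds because $\beta-2\alpha>0$.
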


\begin{proof}
The first inequality is \eqref{eq:M}. The second holds because $e^{2u}\ge1$
for $u\ge0$. For the third, $u\le\alpha\log c$ gives
$e^{2u}\le e^{2\alpha\log c}=c^{2\alpha}$. Finally,
$2c^{2\alpha}\le c^{\beta}$ is equivalent to $2\le c^{\beta-2\alpha}$; since
$\beta-2\alpha>0$ and $c>1$, this is equivalent to
$\log2\le(\beta-2\alpha)\log c$, i.e.\ to $c\ge2^{1/(\beta-2\alpha)}$.
\end{proof}

\begin{theorem}[Conditional quadratic lower bound]
\label{thm:P5}
Assume \eqref{eq:QM} with constants $a,\beta,c_0$, and let $0<\alpha<\beta/2$.
Then for every
\begin{equation}
  c\ \ge\ \max\bigl\{c_0,\ 2^{1/(\beta-2\alpha)},\ e^{2/\alpha}\bigr\}
  \label{eq:cthreshold}
\end{equation}
one has
\begin{equation}
  2\le u\le\alpha\log c
  \quad\Longrightarrow\quad
  \Tr\varphi_u(S_c)\ \ge\ \frac a4\,u^2\log c .
  \tag{P5}
  \label{eq:P5}
\end{equation}
In particular the range $2\le u\le\alpha\log c$ is nonempty, and \eqref{eq:P5}
is exactly the target \eqref{eq:target} with
\begin{equation}
  \alpha=\frac\beta3,\qquad
  \kappa_T=\frac a4,\qquad
  L_T=2 .
  \label{eq:safechoice}
\end{equation}
\end{theorem}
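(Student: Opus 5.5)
The proof reduces to a direct combination of Proposition~\ref{prop:EQM} and Lemma~\ref{lem:range}; no new analytic input is needed. Fix $c$ satisfying \eqref{eq:cthreshold} and $u$ with $2\le u\le\alpha\log c$. The hypotheses of Proposition~\ref{prop:EQM} are $u\ge2$, $c\ge c_0$ and $M(u)\le c^{\beta}$.

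\emph{Step 1: the truncation condition.} Since $c\ge e^{2/\alpha}>1$ and $u\le\alpha\log c$, Lemma~\ref{lem:range} applies. Because $0<\alpha<\beta/2$, it gives $M(u)\le2c^{2\alpha}\le c^{\beta}$, using $c\ge2^{1/(\beta-2\alpha)}$.

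\emph{Step 2: the quadratic bound.} With $c\ge c_0$ and $u\ge2$, Proposition~\ref{prop:EQM} now yields $\Tr\varphi_u(S_c)\ge\frac a4u^2\log c$, which is \eqref{eq:P5}.

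\emph{Step 3: nonemptiness of the range.} The condition $c\ge e^{2/\alpha}$ means $\alpha\log c\ge2$. Hence the interval $[2,\alpha\log c]$ contains $u=2$ and is nonempty.

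\emph{Step 4: the specialization \eqref{eq:safechoice}.} Take $\alpha=\beta/3$, which satisfies $0<\beta/3<\beta/2$. Then \eqref{eq:P5} reads \eqref{eq:target} with $\kappa_T=a/4$ and $L_T=2$. It holds for all $c$ beyond the explicit threshold $\max\{c_0,2^{3/\beta},e^{6/\beta}\}$, which is what ``all sufficiently large $c$'' requires; here $2^{1/(\beta-2\beta/3)}=2^{3/\beta}$.

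The only point needing care is Step~1: the strict inequality $\alpha<\beta/2$ is what absorbs the factor $2$ coming from the ceiling in $M(u)=\lceil e^{2u}\rceil$. Everything else is bookkeeping of thresholds.
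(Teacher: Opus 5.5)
Your proposal is correct and follows the paper's proof essentially step for step. You use $c\ge e^{2/\alpha}$ for nonemptiness of the range and for $c>1$, then Lemma~\ref{lem:range} with $c\ge2^{1/(\beta-2\alpha)}$ to get $M(u)\le c^{\beta}$, then Proposition~\ref{prop:EQM}, and finally you specialize to $\alpha=\beta/3$ with the threshold $\max\{c_0,2^{3/\beta},e^{6/\beta}\}$.
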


\begin{proof}
Let $c$ satisfy \eqref{eq:cthreshold}. From $c\ge e^{2/\alpha}$ we get
$\alpha\log c\ge2$, so the range $2\le u\le\alpha\log c$ is nonempty; also
$c>1$. Fix $u$ in that range. By Lemma~\ref{lem:range}, using
$c\ge2^{1/(\beta-2\alpha)}$, we get $M(u)\le c^{\beta}$. Since moreover
$c\ge c_0$ and $u\ge2$, Proposition~\ref{prop:EQM} applies and yields
\eqref{eq:EQM}, which is the assertion \eqref{eq:P5}.

For the final statement, the choice $\alpha=\beta/3$ satisfies
$0<\alpha<\beta/2$, so the above applies verbatim with $\alpha$ pinned to
$\beta/3$; the threshold \eqref{eq:cthreshold} is then to be read with that
same value, namely
$c\ge\max\{c_0,\,2^{3/\beta},\,e^{6/\beta}\}$. Comparing the resulting
conclusion with \eqref{eq:target} we read off $\kappa_T=a/4$ and $L_T=2$.
\end{proof}

\begin{remark}[What Theorem~\ref{thm:P5} does and does not say]
\label{rem:P5status}
Theorem~\ref{thm:P5} is a proved implication whose hypothesis is
\eqref{eq:QM}.  This particular route establishes its conclusion only under
that hypothesis; the unconditional and independent proof of
\eqref{eq:target} is the determinant route of Section~\ref{sec:determinant},
 which uses none of the hypotheses of this section.  The constants
$\kappa_T=a/4$ and $\alpha=\beta/3$ inherit whatever values $a$ and $\beta$
would have in a proof of \eqref{eq:QM}, and $L_T=2$ comes from the
requirement $ue^u\ge4$ in Lemma~\ref{lem:P4}, not from \eqref{eq:QM}.
\end{remark}

\section{Comparison with the determinant method}
\label{sec:determinant}

The target \eqref{eq:target} also follows from a signed
growing-parameter determinant estimate proved in \cite{CompanionC}.
We record the operator identification and the resulting trace estimate
to compare this argument with the operator and moment methods above.

\subsection{Unitary equivalence and determinant identity}\label{sec:dictionary}

We first identify $S_c$ with a sine-kernel operator on a fixed interval.

\begin{proposition}[unitary equivalence and determinant identity]
\label{prop:unitarylink}
For $s>0$ let $K_s$ be the integral operator on $L^2(-1,1)$ with kernel
\begin{equation}\label{eq:Ks}
 K_s(\lambda,\mu)=\frac{\sin\bigl(s(\lambda-\mu)\bigr)}{\pi(\lambda-\mu)},
 \qquad K_s(\lambda,\lambda)=\frac s\pi ,
\end{equation}
and put $s=\pi c/2$.  Define
\begin{equation}\label{eq:Uc}
 (U_cf)(\lambda)=\sqrt{\tfrac c2}\,f\!\left(\tfrac c2(1+\lambda)\right),
 \qquad -1<\lambda<1 .
\end{equation}
Then $U_c:L^2(0,c)\to L^2(-1,1)$ is unitary and
\begin{equation}\label{eq:unitaryequivalence}
 U_cS_cU_c^{-1}=K_{\pi c/2}.
\end{equation}
Consequently, writing
\begin{equation}\label{eq:Ddef}
 D(s,\omega)=\det\bigl(I+(e^{2\omega}-1)K_s\bigr),\qquad\omega\ge0,
\end{equation}
one has, for $u=2\omega$, the exact identity
\begin{equation}
 \Tr\varphi_u(S_c)=\log D\!\left(\frac{\pi c}2,\frac u2\right)-uc .
 \tag{U}
 \label{eq:dettraceidentity}
\end{equation}
\end{proposition}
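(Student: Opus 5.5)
The proof has three parts: show $U_c$ is unitary, conjugate $S_c$ by it to get $K_{\pi c/2}$, and then read off the determinant identity from the functional calculus.

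\emph{Unitarity.} The change of variables $x=\tfrac c2(1+\lambda)$ maps $(-1,1)$ bijectively and affinely onto $(0,c)$, with $dx=\tfrac c2\,d\lambda$. Hence
\[
\|U_cf\|^2=\int_{-1}^1\tfrac c2\,|f(\tfrac c2(1+\lambda))|^2\,d\lambda=\|f\|^2 .
\]
The inverse is explicit: $(U_c^{-1}g)(x)=\sqrt{2/c}\,g(2x/c-1)$. So $U_c$ is an isometry onto $L^2(-1,1)$, i.e.\ unitary.

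\emph{Conjugation.} For $g\in L^2(-1,1)$, compute $(U_cS_cU_c^{-1}g)(\lambda)$ by substituting $y=\tfrac c2(1+\mu)$ in the integral defining $S_c$. This gives
\[
\int_{-1}^1 \sqrt{\tfrac c2}\,K\!\bigl(\tfrac c2(\lambda-\mu)\bigr)\sqrt{\tfrac 2c}\;\tfrac c2\, g(\mu)\,d\mu .
\]
The two square-root factors cancel, leaving the kernel $\tfrac c2 K(\tfrac c2(\lambda-\mu))$. Inserting $K(s)=\sin(\pi s)/(\pi s)$ turns this into
\[
\frac{\sin(\tfrac{\pi c}{2}(\lambda-\mu))}{\pi(\lambda-\mu)} ,
\]
and on the diagonal it equals $\tfrac c2=s/\pi$ with $s=\pi c/2$. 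This is exactly \eqref{eq:Ks}, which proves \eqref{eq:unitaryequivalence}.

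\emph{Determinant identity.} By unitary equivalence, $K_s$ is positive and trace class with the same eigenvalues $\lambda_j(c)\in[0,1]$ as $S_c$ (Theorem~\ref{thm:factorization}). The Fredholm determinant of $I+zT$ for trace-class $T$ is therefore the convergent product $\prod_j(1+z\lambda_j)$. Taking $z=e^{2\omega}-1=e^u-1\ge0$, every factor is at least $1$, so the product is positive and
\[
\log D=\sum_j\log\bigl(1+(e^u-1)\lambda_j\bigr).
\]
This sum converges absolutely because $0\le\log(1+y)\le y$ and $\sum_j\lambda_j=c$. On the other side, the spectral theorem (as in the proof of Theorem~\ref{thm:P2}) gives $\Tr\varphi_u(S_c)=\sum_j\varphi_u(\lambda_j)$. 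Both series converge absolutely, so we may split $\sum_j\varphi_u(\lambda_j)$ into $\sum_j\log(1+(e^u-1)\lambda_j)-u\sum_j\lambda_j$. Using $\Tr S_c=c$ from \eqref{eq:A1} for the second sum yields \eqref{eq:dettraceidentity}. Note that the correction term must come out as $u\Tr S_c=uc$; I would recheck this against the printed formula to confirm the normalization.

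\emph{Main obstacle.} The only delicate point is the determinant step. One has to justify that the Fredholm determinant equals the eigenvalue product; this is Lidskii's theorem, which for self-adjoint trace-class operators is elementary via the spectral decomposition. One also has to handle the branch of $\log D$, which is harmless since $D\ge1$. The unitary computation itself is routine bookkeeping of Jacobian factors.
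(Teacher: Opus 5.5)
Your proposal is correct and follows essentially the same route as the paper: the affine change of variables $x=\tfrac c2(1+\lambda)$ gives unitarity and the kernel $\tfrac c2K(\tfrac c2(\lambda-\mu))$, and the spectral theorem, the eigenvalue-product form of the Fredholm determinant with unitary invariance, and $\Tr S_c=c$ give the identity. The normalization you flagged checks out: the correction term is $u\Tr S_c=uc$, exactly as in \eqref{eq:dettraceidentity}.
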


\begin{proof}
The substitution $x=\tfrac c2(1+\lambda)$ has Jacobian $c/2$ and is a
bijection of $(-1,1)$ onto $(0,c)$, so
$\|U_cf\|_{L^2(-1,1)}^2=\tfrac c2\int_{-1}^1
|f(\tfrac c2(1+\lambda))|^2\dd\lambda=\int_0^c|f(x)|^2\dd x$ and $U_c$
is onto; hence $U_c$ is unitary.  With $x=\tfrac c2(1+\lambda)$ and
$y=\tfrac c2(1+\mu)$ the transformed kernel is
\[
 \frac c2\,K\!\left(\frac c2(\lambda-\mu)\right)
 =\frac c2\cdot
 \frac{\sin\bigl(\tfrac{\pi c}2(\lambda-\mu)\bigr)}
      {\tfrac{\pi c}2(\lambda-\mu)}
 =\frac{\sin\bigl(\tfrac{\pi c}2(\lambda-\mu)\bigr)}{\pi(\lambda-\mu)},
\]
which is $K_{\pi c/2}(\lambda,\mu)$ for $\lambda\ne\mu$; at
$\lambda=\mu$ both sides equal $c/2=s/\pi$, and both kernels are
continuous, so \eqref{eq:unitaryequivalence} holds.

By Theorem~\ref{thm:factorization}, $S_c$ is a positive trace-class
contraction, so $\sigma(S_c)\subset[0,1]$ and $\varphi_u(S_c)$ is trace
class by Lemma~\ref{lem:upper}.  The spectral theorem and the definition
of the Fredholm determinant give
\[
 \Tr\varphi_u(S_c)
 =\sum_j\Bigl[\log\bigl(1+(e^u-1)\lambda_j(c)\bigr)-u\lambda_j(c)\Bigr]
 =\log\det\bigl(I+(e^u-1)S_c\bigr)-u\Tr S_c ,
\]
both series converging absolutely.  Now $\Tr S_c=c$ by \eqref{eq:A1},
and by \eqref{eq:unitaryequivalence} together with the unitary
invariance of the Fredholm determinant,
$\det(I+(e^u-1)S_c)=\det(I+(e^u-1)K_{\pi c/2})=D(\pi c/2,u/2)$ when
$u=2\omega$.  This is \eqref{eq:dettraceidentity}.
\end{proof}

\subsection{The determinant asymptotic}\label{sec:quoteddet}

\begin{quotedthm}[Signed growing-parameter sine-kernel determinant
\cite{CompanionC}]\label{qt:signed}
Let $D(s,\omega)$ be as in \eqref{eq:Ddef}.  For every $A>0$ there are
constants $s_A\ge3$ and $C_A<\infty$, depending only on $A$, such that
for all $s\ge s_A$ and all $\omega$ with $0\le\omega\le A\log s$,
\begin{equation}
 \log D(s,\omega)
 =\frac{4\omega s}{\pi}+\frac{2\omega^2}{\pi^2}\log(4s)
 +2\log\bigl[\BG(1+i\omega/\pi)\BG(1-i\omega/\pi)\bigr]
 +\mathcal R_A(s,\omega),
 \tag{RH}
 \label{eq:RH}
\end{equation}
where $\BG$ is the Barnes $G$-function and
\begin{equation}\label{eq:RHerror}
 |\mathcal R_A(s,\omega)|\le C_A\frac{(1+\omega)^4\log^2s}{s}.
\end{equation}
\end{quotedthm}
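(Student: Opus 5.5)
The statement is quoted from \cite{CompanionC}, so the plan below is how I would reprove \eqref{eq:RH} by nonlinear steepest descent. The sign conventions for the logarithmic term are the ones I am least sure of.

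\emph{Setup.} For fixed $\omega\ge0$ put $\gamma=1-e^{2\omega}\le0$ and $\beta=\frac1{2\pi i}\log(1-\gamma)=\frac{\omega}{\pi i}$, which is purely imaginary. Then $D(s,\omega)=\det(I-\gamma K_s)$. The kernel $K_s$ on $(-1,1)$ is integrable in the Its--Izergin--Korepin--Slavnov sense, so $\log D$ is encoded by a $2\times2$ Riemann--Hilbert problem $Y$. Its jump on $[-1,1]$ is $\begin{pmatrix}1-\gamma & \gamma e^{2is\lambda}\\ -\gamma e^{-2is\lambda}&1+\gamma\end{pmatrix}$ (up to normalization). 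I would use the differential identity in $\omega$, namely $\partial_\omega\log D=-\partial_\omega\gamma\,\Tr\bigl((I-\gamma K_s)^{-1}K_s\bigr)/\gamma$. It can be expressed through the boundary values of $Y$ at $\pm1$, and gives $\log D(s,\omega)=\int_0^\omega\partial_{\omega'}\log D\,d\omega'$ with $D(s,0)=1$. This integration in $\omega$, rather than a direct evaluation, is what produces the Barnes $G$ constant: it is the standard Budylin--Buslaev / Deift--Its--Krasovsky route.

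\emph{Steepest descent.} I would open lenses around $(-1,1)$, factoring the jump into upper and lower triangular pieces carrying $e^{\pm2is\lambda}$. A Szegő-type scalar function $\bigl(\frac{\lambda-1}{\lambda+1}\bigr)^{\beta}$ removes the constant diagonal jump. This gives the global parametrix, and it produces the linear term $4\omega s/\pi=-4i\beta s$. Near $\lambda=\pm1$ I would install confluent hypergeometric parametrices $\Psi(\zeta;\beta)$ with $\zeta=2s(\lambda\mp1)$. The matching factor $(4s)^{\pm\beta}$ at the two endpoints generates the $\frac{2\omega^2}{\pi^2}\log(4s)$ term after integration. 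The Gamma-function connection coefficients $\Gamma(1\pm\beta)$ integrate to $2\log[\BG(1+i\omega/\pi)\BG(1-i\omega/\pi)]$.

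\emph{Uniformity in $\omega$.} This is the main obstacle: the final small-norm problem must be controlled uniformly for $0\le\omega\le A\log s$. The favourable feature is that $\beta$ is imaginary, so $|(4s)^{\pm\beta}|=1$ and the global parametrix does not blow up polynomially in $s$. The unfavourable feature is that the local parametrices and their large-$\zeta$ expansions have coefficients growing in $|\beta|$. The first correction is $\beta^2/\zeta$, and the relevant remainder after one term is $O((1+|\beta|)^2/|\zeta|)$. Moreover, $|\Gamma(1\pm\beta)|^{\pm1}$ carries factors $e^{\pm\pi|\beta|/2}$, which must cancel in the products entering the jump.

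I would first try radius $\delta$ fixed for the endpoint disks and derive explicit bounds $\|\Psi(\zeta;\beta)\|\le C(1+|\beta|)^{k}$ on $|\zeta|\asymp s$. This should give a jump of $R$ equal to $I+O((1+\omega)^2/s)$ on the circles and exponentially small on the lens edges. That last claim is valid as long as $e^{-cs}$ beats the polynomial growth in $\omega\le A\log s$, which it does for $s\ge s_A$. The small-norm theory then gives $R=I+O((1+\omega)^2/s)$. Inserting this into the differential identity, the $\omega$-derivative picks up another factor $(1+\omega)^2$ from the $\beta$-dependence of the parametrix and a $\log s$ from differentiating $(4s)^{\beta}$. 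Integrating over $\omega'\in[0,\omega]$, with $\omega\lesssim\log s$, adds at most one more factor. Together these give the remainder bound $C_A(1+\omega)^4\log^2 s/s$ of \eqref{eq:RHerror}.

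If the fixed-radius matching fails because the parametrix is only good for $|\zeta|\gg|\beta|^2$, I would shrink the disks to radius $\asymp(1+\omega)^2/s\cdot s^{\epsilon}$ instead. That region is still admissible since $\omega\le A\log s$.
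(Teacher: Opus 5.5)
The paper gives no proof of this statement. It is imported from \cite{CompanionC}, and Remark~\ref{rem:signedscope} only explains why neither the fixed-parameter results nor the $\gamma\in[0,1)$ theorem of \cite{BDIK2} yields it. Your outline therefore has to stand on its own. Its architecture is the standard one: IIKS problem, lenses, Szeg\H{o} function, confluent hypergeometric parametrices, and integration of an $\omega$-identity from $D(s,0)=1$.

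\textbf{The gap.} The step you correctly identify as the real content, uniformity for $\omega\le A\log s$, rests on a false premise. With $\beta=-i\omega/\pi$ the factor $(4s)^{\pm\beta}$ is unimodular, but the global parametrix is not. One has $\bigl|\bigl(\tfrac{\lambda-1}{\lambda+1}\bigr)^{\beta}\bigr|=\exp\bigl(\tfrac{\omega}{\pi}\arg\tfrac{\lambda-1}{\lambda+1}\bigr)$. This equals $e^{\omega/2}$ at $\lambda=i$ and tends to $e^{\pm\omega}$ on the two sides of $(-1,1)$, i.e.\ it reaches $s^{A}$.

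This growth is harmless on the lens edges, where the jump is $O(e^{2\omega-c\delta s})$. It is not harmless on the matching circles. On $|\lambda-1|=\delta$ the jump of $R$ is $E(\lambda)\bigl[I+\Phi_1/\zeta+\cdots\bigr]E(\lambda)^{-1}$, with $\zeta=2s(\lambda-1)$ and, up to sign conventions, $E(\lambda)=\bigl(2s(\lambda+1)\bigr)^{-\beta\sigma_3}$. The off-diagonal entries are therefore multiplied by $|E_{11}/E_{22}|^{\pm1}=e^{\mp\frac{2\omega}{\pi}\arg(\lambda+1)}$. At $\omega=A\log s$ this reaches $e^{\frac{2\omega}{\pi}\arcsin(\delta/2)}=s^{\frac{2A}{\pi}\arcsin(\delta/2)}$.

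Nothing cancels this factor. The local monodromy is unipotent, so $(\Phi_1)_{12}(\Phi_1)_{21}=-\beta^2$. The real symmetry $Y(\lambda)=\sigma_1\overline{Y(\bar\lambda)}\sigma_1$ of the problem ($\sigma_1$ the Pauli matrix) then forces $|(\Phi_1)_{12}|=|(\Phi_1)_{21}|=\omega/\pi$. So the circle jump deviates from $I$ by order $\frac{\omega}{s\delta}\,s^{\frac{2A}{\pi}\arcsin(\delta/2)}$, not by $O((1+\omega)^2/s)$. Two consequences follow:
\begin{itemize}
\item For $\delta$ independent of $A$ and $A$ large, the small-norm argument does not even start.
\item For $\delta=\delta_A$ small, the direct small-norm bound still loses a factor $s^{\eta_A}$, where $\eta_A=\frac{2A}{\pi}\arcsin(\delta_A/2)$. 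This is incompatible with \eqref{eq:RHerror}.
\end{itemize}

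\textbf{The fallback and possible repairs.} Your fallback moves in the wrong direction. The condition $|\zeta|\gg|\beta|^2$ is not the binding one, since at fixed radius $|\zeta|\asymp s\gg\log^2 s$. Disks of radius $(1+\omega)^2s^{\epsilon-1}$ give $|\zeta|\asymp(1+\omega)^2s^{\epsilon}$ on the circle, hence a matching error of order $s^{-\epsilon}$, which again destroys the $1/s$ rate.

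What is needed is $\omega\delta\lesssim1$. Two routes look workable:
\begin{itemize}
\item Take radius $\asymp1/(1+\omega)$, or $\asymp1/\log s$ on the whole window. This keeps $e^{\frac{2\omega}{\pi}\arcsin(\delta/2)}$ bounded, still gives $|\zeta|\asymp s\delta\gg(1+\omega)^2$, and leaves the lens jumps $O(e^{2\omega-cs/\log s})$. The logarithms in \eqref{eq:RHerror} are consistent with such a choice.
\item Keep a fixed $\delta_A$ with $\eta_A<\frac12$. Compute the first-order correction to $R$ exactly by residues at $\lambda=\pm1$, where the conjugation $E(\pm1)=(4s)^{\mp\beta\sigma_3}$ is unimodular. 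Then bound only the quadratic remainder, which is $O(\mathrm{poly}(\omega)\,s^{2\eta_A-2})$.
\end{itemize}
One of these must be supplied. Once it is, your subsequent power counting for the remainder is heuristic but repairable.

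\textbf{Smaller slips.}
\begin{itemize}
\item The identity should read $\partial_\omega\log D=-\partial_\omega\gamma\,\Tr\bigl((I-\gamma K_s)^{-1}K_s\bigr)$, with no factor $1/\gamma$.
\item This trace is an integral over $(-1,1)$ of the IIKS resolvent diagonal, not a boundary quantity at $\pm1$.
\item For $\beta=-i\omega/\pi$ one has $4\omega s/\pi=+4i\beta s$, not $-4i\beta s$.
\end{itemize}
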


\begin{remark}[Parameter range in Theorem~\ref{qt:signed}]
\label{rem:signedscope}
Two warnings.  (i) Setting $\gamma=1-e^{2\omega}\le0$, the determinant in
\eqref{eq:Ddef} is $\det(I-\gamma K_s)$ with $\gamma\le0$.  The
published growing-parameter theorem of Bothner, Deift, Its and Krasovsky
\cite[Thm.~1.2]{BDIK2} gives the same three main terms, with a better
error and on a much longer range $0\le v<s^{1/3}$, but only for
$\gamma\in[0,1)$; it does not cover the sign needed here, and
\eqref{eq:RH} may not be obtained from it by analytic continuation in
$\gamma$.  (ii) The fixed-parameter case of \eqref{eq:RH} is classical
\cite{BasorWidom83,BudylinBuslaev} and is available with an explicit
error from \cite[eq.~(1.4)]{Charlier21}, whose uniformity is on compact
parameter sets only; it therefore gives no information when $\omega$
grows with $s$.  Theorem~\ref{qt:signed} supplies uniformity on the
logarithmic window on the side
$\gamma<0$.
\end{remark}

\subsection{Consequences for the entropy trace}\label{sec:detconseq}

\begin{corollary}[Uniform entropy-trace asymptotic]
\label{cor:traceasymptotic}
For every $A>0$, uniformly
for $0\le u\le2A\log(\pi c/2)$ as $c\to\infty$,
\begin{equation}
 \Tr\varphi_u(S_c)
 =\frac{u^2}{2\pi^2}\log(2\pi c)
 +2\log\!\left[\BG\!\left(1+\frac{iu}{2\pi}\right)
 \BG\!\left(1-\frac{iu}{2\pi}\right)\right]
 +O_A\!\left(\frac{(1+u)^4\log^2c}{c}\right).
 \tag{RH--S}
 \label{eq:RHS}
\end{equation}
\end{corollary}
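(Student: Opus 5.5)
The plan is to substitute \eqref{eq:RH} into the exact identity \eqref{eq:dettraceidentity} with $s=\pi c/2$ and $\omega=u/2$, and then check three things: the linear terms cancel, the logarithms combine as written, and the parameter window and error term transfer correctly.

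Fix $A>0$ and put $s=\pi c/2$, $\omega=u/2$. The hypothesis $0\le u\le 2A\log(\pi c/2)$ is exactly $0\le\omega\le A\log s$. For $c\ge 2s_A/\pi$ we have $s\ge s_A$, so Theorem~\ref{qt:signed} applies uniformly on the whole window. Proposition~\ref{prop:unitarylink} then gives
\[
\Tr\varphi_u(S_c)=\log D(s,\omega)-uc .
\]

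Next we compare the terms of \eqref{eq:RH} one by one.

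\emph{Linear term.} We have $4\omega s/\pi=4\cdot\tfrac u2\cdot\tfrac{\pi c}{2}/\pi=uc$, which cancels $-uc$ exactly. This cancellation is the reason for the normalization $s=\pi c/2$, and it is consistent with $\Tr S_c=c$.

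\emph{Logarithmic term.} We have $2\omega^2/\pi^2=u^2/(2\pi^2)$ and $\log(4s)=\log(2\pi c)$. This gives $\frac{u^2}{2\pi^2}\log(2\pi c)$, matching the constant $\log(2\pi)$ in \eqref{eq:LW}.

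\emph{Barnes term.} Since $\omega/\pi=u/(2\pi)$, the term $2\log[\BG(1\pm i\omega/\pi)]$ becomes the displayed Barnes term verbatim. The branch of the logarithm is the one fixed in \cite{CompanionC}. The product $\BG(1+iy)\BG(1-iy)=|\BG(1+iy)|^2$ is positive for real $y$, so the real logarithm is meant.

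The remaining step is the error. From \eqref{eq:RHerror},
\[
|\mathcal R_A|\le C_A(1+u/2)^4\log^2(\pi c/2)\big/(\pi c/2).
\]
We bound $(1+u/2)^4\le(1+u)^4$. For $c\ge 2\pi$ we have $1\le\log(\pi c/2)\le\log c+\log(\pi/2)\le 2\log c$. Hence the error is at most $(8C_A/\pi)(1+u)^4\log^2c/c$, with the constant depending only on $A$.

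Nothing here is genuinely hard. The only point needing care is that the uniformity is inherited correctly: the threshold $c\ge\max\{2s_A/\pi,2\pi\}$ and the $O_A$ constant must not depend on $u$. This holds because Theorem~\ref{qt:signed} is uniform on $0\le\omega\le A\log s$, and the change of variables maps the stated $u$-window exactly onto that range.
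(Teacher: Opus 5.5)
Your proposal is correct and follows the paper's proof: insert $s=\pi c/2$, $\omega=u/2$ into \eqref{eq:RH} through the exact identity \eqref{eq:dettraceidentity}, use the exact cancellation $4\omega s/\pi=uc$, rewrite the logarithmic and Barnes terms, and transfer the error bound. Your explicit threshold $c\ge\max\{2s_A/\pi,2\pi\}$ and explicit constant $8C_A/\pi$ make the paper's final $O_A$ step quantitative; they do not change the argument.
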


\begin{proof}
Insert $s=\pi c/2$ and $\omega=u/2$ into \eqref{eq:RH} and subtract
$uc$, using \eqref{eq:dettraceidentity}.  The linear terms cancel
\emph{exactly}, without any asymptotic notation:
\begin{equation}\label{eq:linearcancellation}
 \frac{4\omega s}{\pi}=\frac{4\cdot\frac u2\cdot\frac{\pi c}2}{\pi}=uc .
\end{equation}
Since $4s=2\pi c$, the logarithmic term becomes
$\frac{2(u/2)^2}{\pi^2}\log(2\pi c)=\frac{u^2}{2\pi^2}\log(2\pi c)$, and
since $\frac{i\omega}\pi=\frac{iu}{2\pi}$ the Barnes term is as
displayed.  The error is
$C_A(1+u/2)^4\log^2(\pi c/2)/(\pi c/2)=O_A((1+u)^4\log^2c/c)$.
\end{proof}

\begin{corollary}[the target, via the determinant route]
\label{cor:targetdet}
For every fixed $\alpha>0$
there is $c_T(\alpha)>1$ such that
\begin{equation}
 2\le u\le\alpha\log c
 \quad\Longrightarrow\quad
 \Tr\varphi_u(S_c)\ \ge\ \frac{u^2}{4\pi^2}\log c
 \qquad(c\ge c_T(\alpha)) .
 \tag{T}
 \label{eq:Tproved}
\end{equation}
That is, \eqref{eq:target} holds with
\begin{equation}
 L_T=2,\qquad \kappa_T=\frac1{4\pi^2},\qquad
 \text{every }\alpha>0,
 \tag{T0}
 \label{eq:T0}
\end{equation}
only the threshold $c_T$ depending on $\alpha$.
\end{corollary}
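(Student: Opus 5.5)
Since the statement is only a corollary of Corollary~\ref{cor:traceasymptotic}, I would argue directly from \eqref{eq:RHS}. The plan has three parts: use the main term $\frac{u^2}{2\pi^2}\log(2\pi c)$, control the Barnes term by an explicit lower bound of order $u^2\log(2+u)$, and note that the error term is negligible. The point is that on the window $u\le\alpha\log c$ one has $\log(2+u)=O(\log\log c)=o(\log c)$. That leaves room to give up half of the main coefficient $1/(2\pi^2)$ and still obtain $1/(4\pi^2)$.

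\emph{Step 1: choice of $A$.} Fix $\alpha>0$ and take $A=\alpha$. For $c\ge 2\pi^{-1}e^{2}$, say, we have $2A\log(\pi c/2)\ge\alpha\log c$ (since $\log(\pi c/2)\ge\tfrac12\log c$ for large $c$). So the whole window $2\le u\le\alpha\log c$ lies inside the uniformity range of \eqref{eq:RHS}. The error term then satisfies
\[
\Bigl|O_A\bigl((1+u)^4\log^2c/c\bigr)\Bigr|\le C_\alpha(\log c)^6/c\le1
\]
for $c$ large, and hence it is at most $u^2\log c/(100\pi^2)$, because $u\ge2$.

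\emph{Step 2: the Barnes term, which is the main obstacle.} Put $x=u/(2\pi)$. Since $\overline{\BG(1+ix)}=\BG(1-ix)$, the Barnes term equals $2\log|\BG(1+ix)|^2$, which is real. I need a bound
\[
\log|\BG(1+ix)|^2\ge -C\,(1+x^2)\log(2+x)\qquad(x\ge0)
\]
with an absolute constant $C$. I would derive it from the Weierstrass product
\[
\BG(1+z)=(2\pi)^{z/2}e^{-((1+\gamma)z^2+z)/2}\prod_{k\ge1}\Bigl(1+\frac zk\Bigr)^k e^{-z+z^2/(2k)} .
\]
Multiplying the factors at $z=ix$ and $z=-ix$, the terms linear in $z$ cancel, and one gets
\[
\log|\BG(1+ix)|^2=(1+\gamma)x^2+\sum_{k\ge1}\Bigl[k\log\Bigl(1+\frac{x^2}{k^2}\Bigr)-\frac{x^2}{k}\Bigr].
\]
I would then bound the sum in two ranges.
\begin{itemize}
\item For $k>x$, the inequality $\log(1+y)\ge y-y^2/2$ bounds each summand below by $-x^4/(2k^3)$. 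Summed over $k>x$ this is $\ge -Cx^2$.
\item For $k\le x$, drop the nonnegative logarithm. The summands total at least $-x^2\sum_{k\le x}1/k\ge -x^2(1+\log(1+x))$.
\end{itemize}
This gives the claimed bound. As a cross-check, the Barnes asymptotics predict the leading behaviour $-x^2\log x$. With $\log(2+x)\le\log(2+\alpha\log c)$, the Barnes term is therefore $\ge -C'u^2\log(2+\alpha\log c)$.

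\emph{Step 3: conclusion.} Combining the pieces, and using $\log(2\pi c)\ge\log c$,
\[
\Tr\varphi_u(S_c)\ge u^2\Bigl[\frac{\log c}{2\pi^2}-C'\log(2+\alpha\log c)-\frac{\log c}{100\pi^2}\Bigr].
\]
The bracket is at least $\log c/(4\pi^2)$ once $C'\log(2+\alpha\log c)\le\bigl(\tfrac1{4\pi^2}-\tfrac1{100\pi^2}\bigr)\log c$. This holds for all $c\ge c_T(\alpha)$, which defines the threshold and proves \eqref{eq:Tproved} with the constants \eqref{eq:T0}. Only Step~2 needs genuine care. Everything else is bookkeeping of the window and of the error term of \eqref{eq:RHS}.
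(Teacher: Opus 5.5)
Your proposal is correct and is essentially the paper's argument: you insert the window into \eqref{eq:RHS}, bound the Barnes term below by a constant times $-u^2\log(2+u)$, and use that $\log(2+\alpha\log c)/\log c\to0$ and that the error is $o(u^2\log c)$ uniformly for $2\le u\le\alpha\log c$. The differences are cosmetic: the paper takes $A=\alpha/2$ and quotes the explicit bound \eqref{eq:Barneslower} from \cite{CompanionC}, whereas you take $A=\alpha$ and derive a Barnes bound with an unspecified absolute constant from the Weierstrass product (your derivation checks out), which suffices because that constant only affects the threshold $c_T(\alpha)$.
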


\begin{proof}
This is proved in \cite{CompanionC}; we indicate the two ingredients so
that the constants \eqref{eq:T0} can be checked.  First, a quantitative
lower bound for the Barnes term, proved there from the Weierstrass
product: for $u\ge2$,
\begin{equation}
 2\log\!\left[\BG\!\left(1+\frac{iu}{2\pi}\right)
 \BG\!\left(1-\frac{iu}{2\pi}\right)\right]
 \ \ge\ -\frac{u^2}{\pi^2}\log(2+u).
 \tag{BG}
 \label{eq:Barneslower}
\end{equation}
Second, two uniform smallness statements on $2\le u\le\alpha\log c$:
$\log(2+u)/\log c\le\log(2+\alpha\log c)/\log c\to0$, and, using
$(1+u)^4/u^2\le\tfrac{81}{16}u^2\le\tfrac{81}{16}\alpha^2\log^2c$ for
$u\ge2$,
\[
 \frac{C_A(1+u)^4\log^2c/c}{u^2\log c}
 \le\frac{81C_A\alpha^2}{16}\cdot\frac{\log^3c}{c}\longrightarrow0 .
\]
Taking $A=\alpha/2$ in Corollary~\ref{cor:traceasymptotic} (legitimate
since $\pi/2>1$, so $\alpha\log c\le\alpha\log(\pi c/2)$), and choosing
$c_T(\alpha)$ so large that the first ratio is at most $\tfrac18$ and
the second at most $1/(8\pi^2)$, one gets from
$\log(2\pi c)\ge\log c$ that
\[
 \Tr\varphi_u(S_c)\ \ge\
 \left(\frac12-\frac18-\frac18\right)\frac{u^2}{\pi^2}\log c
 =\frac{u^2}{4\pi^2}\log c .
\]
Both smallness estimates are uniform on the whole window, so one
threshold serves for every $u$ in it.
\end{proof}

\subsection{Relation to the operator and moment methods}
\label{sec:notsuperseded}

Corollary~\ref{cor:targetdet} proves \eqref{eq:target} with a longer
window and better constants than the conditional arguments above.  The
operator-theoretic results nevertheless contain distinct information.

\begin{enumerate}[label=\textup{(\arabic*)},leftmargin=2.4em]
\item The determinant estimate is a nonlinear steepest-descent result
from \cite{CompanionC}.  By contrast,
Sections~\ref{sec:factorization}--\ref{sec:qm} use Fourier analysis,
functional calculus, and fixed-index Landau--Widom asymptotics; without
an additional uniformity hypothesis, these tools give the linear bound
\eqref{eq:E3}.
\item \emph{The hypotheses are of independent interest.}
\eqref{eq:LC} and \eqref{eq:MT} are statements about the prolate
eigenvalue counting function, and \eqref{eq:QM} is a statement about
uniformity in the Landau--Widom moment asymptotics.  Each would, if
proved, yield strictly more than \eqref{eq:target}: (MT) is a
two-sided quantitative counting statement, and (QM) is a uniform-in-$n$
strengthening of \eqref{eq:P3}.  Neither follows from
Corollary~\ref{cor:targetdet}.
\item \emph{Different structural information.}  The exact variance
identity \eqref{eq:A2}, the exact counting identity of
Lemma~\ref{lem:count}, and the positive expansion \eqref{eq:P1} ---
in which \emph{every} term is nonnegative, so that any subfamily may be
discarded --- are exact statements, not asymptotic ones, and are not
consequences of \eqref{eq:RHS}.
\item The results of
Sections~\ref{sec:factorization}--\ref{sec:qm}, including
\eqref{eq:E3} and the three conditional implications, are logically
independent of the determinant estimate.
\end{enumerate}

Thus (LC), (MT), and (QM) remain natural open questions about the
prolate spectrum even though the determinant method establishes the
particular lower bound \eqref{eq:target}.

\section{Assumptions and external inputs}
\label{sec:cert}

The factorization, variance identity, elementary trace bounds, and
positive moment expansion in Sections~\ref{sec:factorization}--
\ref{sec:expansion} are proved directly.  Proposition~\ref{prop:P3} uses
the fixed-test-function theorem of Landau and Widom
(Theorem~\ref{thm:LW}), once for each fixed moment index.  The
determinant comparison in Section~\ref{sec:determinant} uses the signed
growing-parameter asymptotic from \cite{CompanionC}.  These are the two
external asymptotic inputs.

The statements (LC), (MT), and (QM) have a different role: each is an
explicit hypothesis about uniformity in a growing spectral or moment
parameter.  Propositions~\ref{prop:R1} and \ref{prop:A4} and
Theorem~\ref{thm:P5} prove the implications
\[
 (\mathrm{MT})\Longrightarrow(\mathrm{LC})
 \Longrightarrow\eqref{eq:target},
 \qquad
 (\mathrm{QM})\Longrightarrow\eqref{eq:target}.
\]
The hypotheses themselves are not derived here.  They are also not
needed for Corollary~\ref{cor:targetdet}, which follows instead from the
determinant theorem.

The distinction between fixed-index and uniform asymptotics is
important.  For each fixed integer $n\ge1$, Proposition~\ref{prop:P3}
provides a threshold $c_n$ and a remainder
$\varepsilon_n(c)\to0$ such that
\[
 \left|T_n(c)-\frac{\log c}{\pi^2n}\right|
 \le \varepsilon_n(c)\log c
 \qquad(c\ge c_n).
\]
Neither the threshold nor the rate of convergence is controlled as
$n$ varies.  Consequently this statement gives no estimate for a
growing index $n=n(c)$ and does not imply the uniform range
$1\le n\le c^\beta$ in (QM).  Establishing such uniformity is precisely
the additional content of (QM).

\section{Open questions}\label{sec:programme}

We close by collecting three uniformity questions suggested by the
preceding arguments.

\begin{enumerate}[label=\textup{(\arabic*)},leftmargin=2.4em]
\item \textbf{(LC), the counting hypothesis.}  \emph{There are
$a,L,\alpha>0$ such that
$N_c(u/2)-N_c(1)\ge a\,u\log c$ for $L\le u\le\alpha\log c$ and all large
$c$}, with $N_c$ the logistic-window counting function of
Definition~\ref{def:q}.  By Proposition~\ref{prop:R1} this gives
\eqref{eq:target} with $\kappa_T=a/8$ and $L_T=\max\{L,8\}$.  It is a
statement purely about the distribution of the prolate eigenvalues in
the plunge, and it is strictly stronger than what
Corollary~\ref{cor:targetdet} delivers, since the latter controls only a
weighted trace.
\item \textbf{(MT), the mean-type hypothesis.}  \emph{There are
$C\ge0$, $\beta>0$ with
$|N_c(t)-c-t\pi^{-2}\log c|\le C\log c$ for $1\le t\le\beta\log c$ and
all large $c$.}  By Proposition~\ref{prop:A4} this implies (LC) with the
explicit constants \eqref{eq:A4}.  It is a two-sided quantitative form
of the Landau--Widom law, uniform on a logarithmically growing threshold
window.  We are not aware of any proof of it.
\item \textbf{(QM), the uniform moment hypothesis.}  \emph{There are
$a,\beta,c_0>0$ with $T_n(c)\ge a\log c/n$ for all $c\ge c_0$ and all
$1\le n\le c^\beta$.}  By Theorem~\ref{thm:P5} this implies
\eqref{eq:target} with $\kappa_T=a/4$, $L_T=2$, $\alpha=\beta/3$.  It is
precisely the uniform-in-$n$ strengthening of \eqref{eq:P3} that
Theorem~\ref{thm:LW} does not provide.
\end{enumerate}

A proof of any one of (1)--(3) would be of interest independently of
\eqref{eq:target}, and would remain of interest given
Corollary~\ref{cor:targetdet}.  Conversely, no argument in this note,
and no argument known to us, derives any of them from the determinant
route.

\section*{Acknowledgments}

AI-assisted tools were used during manuscript preparation for language
editing, bibliographic cross-checking, and limited symbolic and numerical
consistency checks.  Such checks were used only as supporting verification
and not as substitutes for mathematical proof.  The author reviewed the
resulting manuscript and accepts full responsibility for all statements,
derivations, citations, and conclusions.

\bibliographystyle{plain}
\bibliography{references}

\par\bigskip
\noindent\textsc{Ahmadreza Azimifard}\\
Harmonic Research \& Technologies, LLC\\
\textit{Email:} \href{mailto:afard@harmonicrt.com}{afard@harmonicrt.com}

\end{document}